\documentclass[11pt]{amsart} 
\usepackage{latexsym}\usepackage{amsmath}\usepackage{xspace}\usepackage{enumerate}
\usepackage{amsfonts}\usepackage{amscd}\usepackage{syntonly}\usepackage{amssymb}
\usepackage{hyperref}
\usepackage{bbm}
\usepackage{graphicx}
\usepackage{amssymb}
\usepackage{mathrsfs}
\usepackage{mathtools}
\usepackage{latexsym} 
\usepackage{color}  
\usepackage[alwaysadjust]{paralist}

\newtheorem{thm}{Theorem}
\newtheorem{lemma}[thm]{Lemma}
\newtheorem{prop}[thm]{Proposition}

\newtheorem{rem} [thm]{Remark}

\def\Si{\Sigma}

\renewcommand\phi{\varphi}

\def\tr{\textrm{tr}}

\def\id{\textrm{id}}

\newcommand{\Z}{\mathbb{Z}}

\newcommand{\R}{\mathbb{R}}

\newcommand{\BbD}{\mathbb D}
\newcommand{\BPi}{\overline{\Pi}}
\newcommand{\TPi}{\widetilde{\Pi}}
\newcommand{\Mg}{\calM_\gamma}
\newcommand{\Mgc}{\overline{\Mg}}
\newcommand{\Mgt}{\widetilde{\Mg}}
\newcommand{\Mgone}{\calM_{\gamma,1}}
\newcommand{\Mgonec}{\overline{\calM_{\gamma,1}}}
\newcommand{\Mgonet}{\widetilde{\calM_{\gamma,1}}}

\newcommand{\DD}{\mathbb D}

\newcommand{\RR}{\mathbb R}

\newcommand{\del}{\partial}

\newcommand{\Met}{\tt{Met}}
\newcommand{\Diff}{\mbox{Diff\,}}

\newcommand{\calC}{{\mathcal C}}

\newcommand{\calM}{{\mathcal M}}

\newcommand{\calO}{{\mathcal O}}

\newcommand{\calS}{{\mathcal S}}
\newcommand{\calT}{{\mathcal T}}
\newcommand{\calU}{{\mathcal U}}
\newcommand{\calV}{{\mathcal V}}
\newcommand{\calW}{{\mathcal W}}

\newcommand{\wtM}{\widetilde{M}}

\def\WPe{Weil-Petersson }
\newcommand{\WP}{\mathrm{WP}}

\newcommand{\reg}{{\mathrm{reg}}}
\newcommand{\fc}{\mathfrak c}
\newcommand{\ver}{\mathrm{ver}}
\newcommand{\Stt}{\mathcal S_{\operatorname{tt}}}

\newcommand{\wh}{\widehat}

\hyphenation{Teich-m}

\title{}

\title{Asymptotics of the Weil--Petersson metric}
\author[R.~Mazzeo]{Rafe Mazzeo}
\address[R.~Mazzeo]{Department of Mathematics, Stanford University}
\email{mazzeo@math.stanford.edu}
\urladdr{http://web.stanford.edu/~rmazzeo/cgi-bin/}

\author[J.~Swoboda]{Jan Swoboda}
\address[J.~Swoboda]{Mathematisches Institut der LMU M\"unchen}
\email{swoboda@mathematik.uni-muenchen.de}
\urladdr{http://www.mathematik.uni-muenchen.de/~swoboda/}


\date{\today}

\subjclass[2010]{32G15, 58JXX}

\begin{document}
\maketitle

 \begin{abstract}  
We consider the Riemann moduli space $\mathcal M_{\gamma}$ of conformal structures on a compact surface of 
genus $\gamma>1$ together with its Weil-Petersson metric $g_{\WP}$. Our main result is that $g_{\WP}$ admits a complete
polyhomogeneous expansion in powers of the lengths of the short geodesics up to the singular divisors of the 
Deligne-Mumford compactification of $\mathcal M_{\gamma}$.
 \end{abstract}
\section{Introduction}
The Riemann moduli space $\calM_\gamma$ of conformal structures on a compact surface of genus $\gamma > 1$
is an object of key importance in several branches of mathematics and mathematical physics. Part of its fascination
is that it is endowed with numerous natural geometric structures. We focus here on one of these, the \WPe metric $g_{\WP}$. 
We recall that $(\calM_\gamma, g_{\WP})$ is an incomplete Riemannian space, and a quasi-projective variety of (complex) 
dimension $3\gamma-3$. Its Deligne-Mumford compactification contains a collection of immersed divisors 
$D_1 \cup \ldots \cup D_N$, $N = 2\gamma-2$, which meet with simple normal crossings, and along which 
$\overline{\calM}_\gamma$ has orbifold singularities, cf.\ \cite{JW}.  We denote by $\BbD$ this entire
divisor, i.e., the union of the $D_j$. The \WPe metric is not fully compatible with this 
compactification in the sense that the local asymptotic behaviour of $g_{\WP}$ near these divisors is somewhat
complicated: normal to each divisor it has cusp-like behavior, but at intersections of the divisors, these normal cusps 
do not interact. Our goal in this paper is to sharpen the work of Masur \cite{Masur}, Yamada \cite{Yamada} 
and Wolpert \cite{Wolpert,Wolpert1}, and in a slightly different direction, Liu-Sun-Yau \cite{LSY1, LSY2}, each 
of whom provided successively finer estimates. This work also refines
Wolpert's very recent paper \cite{Wolpert2}, which proves a certain uniformity of derivatives for this metric.
We prove here that $g_{\WP}$ has a complete polyhomogeneous asymptotic expansion at $\BbD$, with product 
type expansions at the intersections of the $D_j$, see Theorem \ref{thm:mainthm} below for the precise statement. 
In general terms, a polyhomogeneous expansion is an asymptotic series, which remains valid even after differentiation, but which 
may involve possibly fractional powers of the distance function to the boundary.  We obtain, in fact, that the metric
coefficients are essentially `log-smooth', so in other words, involve only nonnegative integer powers of a natural
boundary defining function $\rho$, where each term $\rho^k$ is multiplied by a polynomial in $\log \rho$.  (In fact,
$\rho$ is the square root of the length of the degenerating geodesic.) 
As we explain later, this is the sharpest type of regularity one might hope to obtain, and in particular provides
more information than the `stable regularity' estimates of \cite{Wolpert2}.  What makes this somewhat different
than analogous regularity results, cf.\ \cite{LM}, \cite{JMR}, \cite{CMR}, is that $g_{\WP}$ does not satisfy an
elliptic equation, but instead is the induced $L^2$ metric in the gauge-theoretic construction of $\calM_\gamma$ -- or
in other words, is the restriction of the $L^2$ metric on the space of all symmetric $2$-tensors to the finite dimensional
subspace of transverse-traceless tensors. Our work leaves open the precise identification of the terms in the expansion 
of $g_{\WP}$. That is a very different sort of task, but one which becomes possible only once one has established that
an expansion exists at all! The first few terms are computed in \cite{OW}. The result here is also consistent with (and relies on!) the 
recent paper of Melrose-Zhu \cite{MZ}, who obtain a similar type of expansion for the family of hyperbolic metrics 
on a degenerating hyperbolic surface; indeed, their result is one of the key ingredients here. 

The broader context of this paper is that the necessity of determining higher asymptotics of the \WPe metric
became apparent in the work that led up to \cite{JMMV}. The goal enunciated there is to study the natural
elliptic operators on $(\calM_\gamma, g_{\WP})$, for example, the Hodge Laplacian, twisted Dirac operators, etc.
Because of the singularities of $\overline{\calM}_\gamma$, the first step in any such study is to come to terms
with the effect on these operators of the singular structure of the metric along the divisors, and in particular to determine
whether these make it necessary to introduce new boundary conditions. It was shown in \cite{JMMV} that such
boundary conditions are unnecessary for the scalar Laplacian, i.e., the scalar Laplacian is essentially self-adjoint. 
In current work by the second author here and Gell-Redman, it is proved that the Hodge Laplacian on differential forms 
is also essentially self-adjoint; in other words, the natural action of these operators on $\calC^\infty_0(\calM_\gamma)$ 
has a unique self-adjoint extension in $L^2$. From these results one can go on to develop the spectral geometry and 
index theory for $(\calM_\gamma, g_{\WP})$, and indeed this is an area of ongoing investigation by the authors, Gell-Redman 
and others.  One particularly interesting goal is to prove a signature theorem relative to the \WPe metric; this 
would be a counterpart to the Gauss-Bonnet theorem of \cite{LSY}. 

Numerous people have been very helpful in teaching us about the Riemann moduli space, and in discussing 
various parts of the geometry below. We mention in particular Dan Freed, Lizhen Ji, Maryam Mirzakhani, Andras Vasy, 
Mike Wolf, Sumio Yamada, and in particular Scott Wolpert. We also thank Richard Melrose and Xuwen Zhu;
their paper \cite{MZ} appeared in the later stages of this research and clarified one part of our analysis
substantially. Our paper was 
initiated during a several month visit to the Stanford Math Department by the second author, funded by DFG through 
grant  Sw 161/1-1. R.M.\ was partially funded by the NSF Grant DMS-1105050. J.S.\ gratefully acknowledges the 
kind hospitality of the Department of Mathematics at Stanford University.

\section{Preliminaries on the Riemann moduli space and $g_{\WP}$}\label{sect:prelimin}
In this section we recall a number of well-known facts about the Riemann moduli space. We begin with the
more topological aspects, following the monograph by Farb and Margalit \cite{FM}.

Let $S$ be the {\it model surface}, i.e.\ an oriented, closed smooth surface of genus $\gamma\geq2$. 
The \emph{Teichm\"uller space} $\mathcal T_{\gamma}$ of surfaces of genus $\gamma$ is the set of equivalence classes  
$[(\Sigma,\phi)]$, where $\Sigma$ is a Riemann surface (the Riemannian metric, or conformal or complex structure, 
is suppressed from the notation), $\phi\colon S\to \Sigma$ is a diffeomorphism, called a \emph{marking}, and
\begin{equation*}
(\Sigma_1,\phi_1)\sim(\Sigma_2,\phi_2)
\end{equation*}
if there is an isometry $I\colon \Sigma_1 \to \Sigma_2 $ such that the maps $I$ and $\phi_2\circ\phi_1^{-1}$ 
are isotopic. The set $\mathcal T_{\gamma}$ is in bijective correspondence with the representation variety 
$\operatorname{DF}(\pi_1(S), \operatorname{PSL}(2,\R))/\operatorname{PSL}(2,\R)$, i.e., the space of conjugacy classes 
of discrete faithful representations of the fundamental group $\pi_1(S)$ into $\operatorname{PSL}(2,\R)$. The latter 
space carries a natural topology, induced by the  compact-open topology of $\operatorname{Hom}(\pi_1(S),
\operatorname{PSL}(2,\R))$, in terms of which $\mathcal T_{\gamma}$ is Hausdorff. 

There is a geometrically defined atlas of charts which make $\calT_\gamma$ a smooth manifold of dimension $6\gamma-6$.
To describe this, fix a maximal set of pairwise disjoint, oriented simple closed curves $\{c_1,\ldots,c_{3\gamma-3}\}$ on $S$. 
These decompose $S$ into a collection $\{P_1,\ldots,P_{2\gamma-2}\}$ of pairs of pants, i.e., spheres with three open disks 
removed. A hyperbolic metric on each $P_j$ is determined up to isometry by specifying an unordered triple of positive numbers,
corresponding to the lengths of the three boundary curves; these boundary curves are then geodesics for this hyperbolic metric.
We can then attach pairs of pants to one another along a common boundary component of the same length. There is 
a twist parameter $\omega$ when we attach any two boundary curves which takes values in $\RR$. Using all of this, one shows 
that an element of $\calT_\gamma$ is determined by the pair of $(3\gamma-3)$-tuples 
\begin{multline*}
(\ell_1,\ldots,\ell_{3\gamma-3})=(\ell_{\Sigma}(c_1)\ldots,\ell_{\Sigma}(c_{3\gamma-3}))\in\R_+^{3\gamma-3},  \\
\mbox{and}\quad (\omega_1,\ldots\omega_{3\gamma-3})\in\R^{3\gamma-3}
\end{multline*}  
By a result of Fricke \cite[Theorem 9.5]{FM}, the map which assigns to a point $[(\Sigma,\phi)]\in\mathcal T_{\gamma}$  its 
\emph{Fenchel-Nielsen coordinates} $(\ell_1,\ldots,\ell_{3\gamma-3},\omega_1,\ldots\omega_{3\gamma-3})$ is a homeomorphism to 
$\R_+^{3\gamma-3}\times\R^{3\gamma-3} \cong\R^{6\gamma-6}$.   Fenchel-Nielsen coordinates provide global coordinates 
for $\mathcal T_{\gamma}$ which depend on the chosen collection of curves. 

We are interested here in the \emph{Riemann moduli space}, which is the quotient of $\mathcal T_{\gamma}$ by the action of 
the \emph{mapping class group} $\operatorname{Map}(S)$ of the surface $S$. This is the infinite discrete group  of isotopy classes of orientation preserving diffeomorphisms of $S$. It is isomorphic to the group of 
outer automorphisms of $\pi_1(S)$. The action of $\operatorname{Map}(S)$ on $\mathcal T_{\gamma}$ is given by
\begin{equation*}
f\cdot[(\Sigma,\phi)]=[(\Sigma,\phi\circ\psi^{-1})],
\end{equation*}
where $\psi\colon S\to S$ is any diffeomorphism representing $f$. We define the Riemann moduli space $\mathcal M_{\gamma}$ 
as the quotient $\mathcal T_{\gamma}/\operatorname{Map}(S)$. This action is properly discontinuous \cite[Sect.~11.3]{FM},
hence $\mathcal M_{\gamma}$ is an orbifold. Away from orbifold singularities, Fenchel-Nielsen parameters provide a local 
coordinate system on $\mathcal M_{\gamma}$. 

The space $\mathcal M_{\gamma}$ is not compact. Indeed, letting any one length parameter $\ell_j$ tend to zero gives
a sequence of points in $\calM_\gamma$ which leaves every compact set.  The converse to this statement is provided by 
\medskip

\noindent {\bf Mumford's compactness criterion}  \emph{For any $\varepsilon>0$, define 
the \emph{$\varepsilon$-thick part} of $\Mg$
\begin{equation*}
\Mg^\varepsilon=\{[(\Sigma,\phi)]\in\mathcal M_{\gamma}\mid\min_{1\leq j\leq3\gamma-3}\ell_{\Sigma}(c_j)\geq\varepsilon\};
\end{equation*}
Then for every $\varepsilon>0$, $\Mg^\varepsilon$ is compact.}

\medskip

\noindent If $\varepsilon$ is sufficiently small, $\mathcal M_{\gamma}\setminus\Mg^\varepsilon$ 
is connected, so $\mathcal M_{\gamma}$ has precisely one end.

For the purposes of this paper, we recall an alternate approach to defining $\calT_\gamma$ and $\calM_\gamma$, following
\cite{Tromba}, which leads directly to the Weil-Petersson metric.  Regarding $\Si$ as a smooth surface, consider
the space $\calC^\infty(\Si; \operatorname{Sym}^2 T^* \Si)$ of symmetric $2$-tensors on $\Si$, and 
the open subset $\calC^\infty(\Si; \operatorname{Sym}^2_+ T^* \Si)$ of sections which are everywhere
positive definite.  A Riemannian metric $g$ on $\Si$ is an element of this latter space, and we write 
$\Met$ (or $\Met(\Si)$) the space of all such metrics.   The  group of orientation preserving diffeomorphisms $\Diff(\Si)$ acts on 
$\Met$ by pullback. The subgroup $\Diff_0(\Si)$, consisting of all  orientation preserving diffeomorphisms isotopic to the identity,
is the connected component of the identity in $\Diff(\Si)$, and $\Diff(\Si)/\Diff_0(\Si) = \mathrm{Map}\,(\Si)$. 
Next, to each Riemannian metric we associate its Gauss curvature function 
$K_g \in \calC^\infty(\Si)$.  The space $\Met^{-1}$ is the space of metrics with $K_g \equiv -1$
(this is nonempty because $\gamma \geq 2$).  There is a smooth action of $\Diff(\Si)$ on $\Met(\Si)$, 
and the alternate characterizations of the Teichm\"uller and Riemann moduli spaces are as the quotients
\[
\calT_\gamma = \Met^{-1}(\Si)/ \Diff_0(\Si), \quad \mbox{and}\quad \calM_\gamma = \Met^{-1}(\Si)/ \Diff(\Si).
\]
Points in either of these space are denoted as equivalence classes $[g]$.
In practice, one must actually introduce a finite-regularity topology on all of these objects; we have
used smooth metrics and diffeomorphisms for simplicity of statement since these quotients are independent
of which Banach topology we use.  We discuss this more carefully in \textsection \ref{sect:modulispaces}   below. 

The space $\Met(\Si)$ carries a natural $L^2$ (or Ebin) metric, defined as follows. Elements of
$T_g \Met$ are sections of $\calC^\infty(\Si; \operatorname{Sym}^2(\Si))$ (with no positivity conditions), and then,
for $h_1, h_2 \in T_g \Met$, 
\begin{equation}
\langle h_1, h_2 \rangle_{L^2} = \int_{\Si} \langle h_1, h_2 \rangle_g\, dA_g.
\label{Ebin}
\end{equation}
This metric restricts to $T_g \Met^{-1}$.   Now, if $g \in \Met^{-1}$, and if we consider 
the local diffeomorphism orbit through $g$, $\calO_g := \{ F^* g: F \in \Diff(\Si),\ F \approx \mbox{id}\}$, then
\[
T_g \calO_g = \{ L_X g:  X \in \calC^\infty(\Si; T\Si)\}.
\]
The orthogonal complement of this tangent space with respect to $\langle \, , \, \rangle_{L^2}$ is the space
\begin{equation}
\Stt(g) = \{ \kappa \in \calC^\infty(\Si; \operatorname{Sym}^2(\Si)): \delta_g \kappa = 0,\ \tr^g \kappa = 0\}.
\label{Sttdef}
\end{equation}
Its elements are the so-called transverse-traceless tensors; these are naturally identified with holomorphic
quadratic differentials on $(\Si,g)$ (i.e., holomorphic with respect to the complex structure compatible with $g$),
and $\dim \Stt(g) = 6\gamma -6$.    An open neighborhood of $0$ in $\Stt(g)$ provides a local chart
in (a finite orbifold cover) of $[g]$ in either $\calT_\gamma$ or $\calM_\gamma$, see \cite{Tromba}, \cite{MW}.
However, more simply, there is a canonical identification 
\[
T_{[g]} \Mg \cong T_{[g]} \calT_\gamma \cong \Stt(g)
\]
for any $\Met^{-1} \ni g \in [g]$. We may finally define the Weil-Petersson metric on either of these spaces by declaring 
that at any point $[g]$, 
\begin{equation}
\left. g_{\WP}\right|_{[g]} (\kappa_1, \kappa_2) = \langle \kappa_1, \kappa_2 \rangle_{L^2(\Si, g)},
\label{defWP}
\end{equation}
where the metric on the right is calculated on the Riemannian surface $(\Si,g)$. 

The Weil-Petersson metric is K\"ahler and everywhere negatively curved. Furthermore, the Riemann moduli space has finite 
diameter with respect to $g_{\WP}$, and it is a remarkable fact (closely related to Mumford's compactness criterion) that the 
corresponding metric completion is naturally identified with the algebro-geometric Deligne-Mumford compactification:
\[
\overline{\calM_\gamma}^{\WP} \cong \overline{\calM_\gamma}^{\mathrm{DM}}.
\]
This is the starting point for our work here.  We are interested in the precise behavior of $g_{\WP}$ on
approach to the points of $\overline{\calM_\gamma}$.  We now record the previously known results about 
$g_{\WP}$ in Fenchel-Nielsen coordinates near the divisors.  Any point $[g_0]$ in a $k$-fold intersection
$D_J = D_{i_1} \cap \ldots \cap D_{i_k}$ defines a complete hyperbolic metric on a noded surface; the nearby 
points $[g] \in  \calM_\gamma$ are hyperbolic surfaces on the compact surface $\Si$ where the corresponding
lengths $\ell(c_{j_i})$, $i = 1, \ldots, k$, are all small. We are assuming for simplicity that the $D_i$ are all distinct
here, i.e., $[g]$ does not lie on a double-point of one of the immersed divisors; furthermore, we also relabel indices so that 
$j_i = i$, $i \leq k$.  We now quote the result we intend to refine: in Fenchel-Nielsen coordinates
near $D_1 \cap \ldots \cap D_k$,  
\begin{equation}
g_{\WP} =\frac{\pi}{2} \sum_{j=1}^k \left( \frac{d\ell_j^2}{\ell_j} +\frac{ \ell_j^3}{4\pi^4} d\omega_j^2\right) + g_D + \eta,
\label{wpmetric1}
\end{equation}
where $g_D$ is the Weil-Petersson metric on the noded surface corresponding to that intersection
of divisors, and $\eta$ is a lower order error term.  The original version of this formula \cite{Masur}, by Masur, gave only
a quasi-isometric equivalence between the two sides; substantial sharpenings were obtained by Wolpert \cite{Wolpert}
and even further by Yamada \cite {Yamada}. The net result of these papers, particularly the last, is that
the remainder term $\eta$ contains no leading order terms, so that the $d\ell_j$ and $d\omega_j$ directions
are orthogonal at $D_j$ in an appropriately rescaled sense, and that  these length and twist directions
corresponding to two different curves $c_i$ and $c_j$ are orthogonal to leading order at $D_i \cap D_j$. 
Later work by Liu, Sun and Yau \cite{LSY} provided estimates of up to four derivatives of the metric coefficients;
their motivation was to compute the curvature of the so-called Ricci metric, $g_{\mathrm{Ric}} = - \mathrm{Ric}(g_{\WP})$.
Quite recently, Wolpert has proved in \cite{Wolpert2} a certain uniform differentiability of the metric coefficients; 
in the language introduced below, this is equivalent to the \emph{conormality} of the metric.

In much of this paper we shall work in local coordinates in $\Mgc$ near a point $[g] \in D_1 \cap \ldots \cap D_k$,
which we write as $w = (\ell_1,\omega_1, \ell_2, \omega_2, \ldots, \ell_k, \omega_k, y)$. Here each $(\ell_j,\omega_j)$
is the Fenchel-Nielsen length and twist pair and $y$ is any choice of smooth local coordinate on $D_1 \cap \ldots \cap D_k$.
Thus $w_{2j-1} = \ell_j$ and $w_{2j} = \omega_j$, $j = 1, \ldots, k$ and $w_i = y_i$, $i > 2k$. We are implicitly
using that $\Mgc$ has a natural smooth (in fact, analytic) structure near $[g]$.

We can now state our main theorem more precisely:
\begin{thm}\label{thm:mainthm}
Writing $g_{\WP}$ as in \eqref{wpmetric1}, then every coefficient $\eta_{pq}$ has a complete asymptotic expansion
\[\
\eta_{pq} \sim \sum_\alpha \sum_{j=0}^{N_\alpha}\ell^{\alpha/2} (\log \ell)^j \eta_{\alpha,j}(\omega,y). 
\]
Each $\alpha$ and $j$ is a multi-index of nonnegative integers, so $\ell^{\alpha/2} = \ell_1^{\alpha_1/2} \ldots \ell_k^{\alpha_k/2}$,
$(\log \ell)^j = (\log \ell_1)^{j_1} \ldots (\log \ell_k)^{j_k}$, and furthermore each $\eta_{\alpha,j} \in \calC^\infty$.
\end{thm}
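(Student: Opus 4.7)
The plan is to exploit the gauge-theoretic description of $g_{\WP}$: tangent vectors to $\calM_\gamma$ at $[g]$ are transverse-traceless tensors in $\Stt(g)$, obtained by orthogonally projecting arbitrary variations of the hyperbolic metric onto the $L^2$-orthogonal complement of the diffeomorphism orbit. Since $g_{\WP}(\kappa_1,\kappa_2) = \langle \kappa_1, \kappa_2\rangle_{L^2(\Sigma,g)}$, the theorem reduces to showing (i) that a frame of TT-tensors $\kappa_j^\ell, \kappa_j^\omega$ dual to the Fenchel-Nielsen tangent vectors $\partial_{\ell_j},\partial_{\omega_j}$ and tangent vectors along $D_J=D_1\cap\dots\cap D_k$ can be constructed in a form which is polyhomogeneous in the parameters $\rho_j = \ell_j^{1/2}$ (and log powers), and (ii) that the resulting $L^2$ integrals on $(\Sigma,g)$ retain polyhomogeneity after integration. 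Throughout, the natural resolved space is $\Mgc$ with boundary defining functions $\rho_j=\ell_j^{1/2}$; polyhomogeneity is meant on this space.

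First I would invoke the Melrose--Zhu result \cite{MZ} to put the degenerating family of hyperbolic metrics $g=g(\ell,\omega,y)$ on $\Sigma$ into polyhomogeneous form, on an appropriate resolution of the total space $\Sigma \times \Mgc$ which blows up the nodes of the degenerating surface. From this one deduces that the \emph{raw} variations $\partial_{\ell_j} g$, $\partial_{\omega_j} g$, and $\partial_{y_i} g$ are themselves polyhomogeneous sections of $\operatorname{Sym}^2 T^*\Sigma$, with indicial structure controlled by the expansion of $g$. Near each pinching neck these variations should match the standard model variations coming from the explicit plumbing / hyperbolic cylinder description (and this model piece carries the leading behavior $d\ell^2/\ell$ and $\ell^3 d\omega^2$ visible in \eqref{wpmetric1}).

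The core analytic step is then the gauge fixing. Each raw variation $h \in T_g \Met$ decomposes as $h = L_X g + \kappa$ with $\kappa \in \Stt(g)$, where $X$ is found by solving an elliptic equation of the form $\delta_g \delta_g^* X = \delta_g h$ (together with a $\operatorname{tr}^g$-correction), on the Riemann surface $(\Sigma,g)$. On a fixed compact hyperbolic surface this is standard Hodge/gauge theory, but as $[g]$ approaches $\BbD$ the surface develops long collars and the fixed-surface estimates degenerate. The heart of the proof is therefore to construct a \emph{parametrix} for $\delta_g\delta_g^*$ on the resolved family which belongs to a suitable pseudodifferential calculus adapted to the degeneration -- a cusp/edge-type calculus whose Schwartz kernels are polyhomogeneous on a resolution of $\Sigma\times\Sigma\times\Mgc$ and whose normal operators are the explicit model operators on the limiting hyperbolic cusps. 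Inverting the model normal operators via separation of variables yields polyhomogeneous indicial roots; bootstrapping in the calculus then gives that the genuine inverse $(\delta_g\delta_g^*)^{-1}$ maps polyhomogeneous data to polyhomogeneous output, with the expected indicial set of half-integer powers of $\ell_j$ times log powers. Applying this inverse to $\delta_g h$ extracts $X$ polyhomogeneously, and hence $\kappa = h - L_X g$ polyhomogeneously.

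The final step is essentially formal: the metric coefficient $\eta_{pq}$ (with leading behavior absorbed into the explicit first term of \eqref{wpmetric1}) equals $\int_\Sigma \langle \kappa_p,\kappa_q\rangle_g\, dA_g$, an integral on the fibers of $\Sigma\times\Mgc \to \Mgc$ of a polyhomogeneous quantity against the polyhomogeneous area form. Pushforward under a fibration with polyhomogeneous data produces polyhomogeneous output with an index set computed from the fiber indicial roots (this is a standard pushforward theorem in the b/edge calculus), and the index set obtained here consists of nonnegative half-integer powers $\ell_j^{\alpha_j/2}$ with integer log multiplicities, as claimed. The main obstacle is unambiguously the construction of the gauge-fixing inverse: identifying the correct resolution of $\Sigma\times\Sigma\times\Mgc$ on which $\delta_g\delta_g^*$ becomes a fully elliptic, polyhomogeneous operator, and verifying that the normal operators at each boundary hypersurface (the diagonal face, the collar faces at each $D_j$, and the corners) are invertible on the appropriate weighted spaces. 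Once this calculus is in place, polyhomogeneity of $g_{\WP}$ follows by the schematic above.
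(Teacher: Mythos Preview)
Your architecture is exactly the paper's: obtain a polyhomogeneous slice of fibrewise metrics (via \cite{MZ}), build a polyhomogeneous local frame of $\Stt(g)$ by applying the projector $T^g = \pi^g(\mathrm{id} - (\delta^g)^*G^g B^g)$, and then push forward the fibre integrals using Melrose's $b$-fibration pushforward theorem on $\Mgonet \to \Mgt$. The divergence is in how the projector is controlled.

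You propose to project the \emph{raw variations} $\partial_{\ell_j}g,\partial_{\omega_j}g,\partial_{y_i}g$, and for this you would indeed need the full inverse of $P^g=B^g(\delta^g)^*$ as a polyhomogeneous family, i.e.\ a genuine cusp/surgery-type pseudodifferential parametrix on a resolution of $\Sigma\times\Sigma\times[0,\ell_0)$, with all normal operators inverted. That program is carried out in \cite{ARS}, and it is heavy. The paper deliberately avoids it. Instead of projecting coordinate variations, the authors manufacture \emph{approximate} TT tensors by hand: the two ``concentrating'' ones are cutoffs of the explicit zero-mode TT tensors $\kappa_{\ell,0},\nu_{\ell,0}$ on the model hyperbolic cylinder, and the remaining $6\gamma-8$ are cutoffs of a basis of $\Stt(g_0)$ on the noded limit surface. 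The point of these choices is that $\delta^{g_\ell}$ of each approximate tensor is supported in a fixed annulus $\{c\le|\tau|\le c'\}$ \emph{away from the pinching neck}. Consequently $G^{g_\ell}$ need only be understood on data vanishing near $\tau=0$; this is done by an elementary glued parametrix (explicit ODE solutions on the cylinder plus the thick-part inverse, corrected by a Neumann series), yielding polyhomogeneity of $G^{g_\ell}h_\ell$ on $\wh\calC$ without ever resolving the double space. Linear independence of the projected frame is then an $L^2$ perturbation estimate. So your outline would work, but the paper's shortcut---replacing coordinate variations by hand-built approximate TT tensors with divergence supported off the neck---is precisely what converts a substantial microlocal construction into a few pages of barrier arguments and explicit formulas.
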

\begin{rem}
As we describe carefully below, this result relies on the polyhomogeneity of a certain uniformizing conformal factor. This has
recently been proved by Melrose and Zhu \cite{MZ} on $\BbD^{\mathrm{reg}}$, i.e., away from the intersections of the
divisors. They anticipate that the extension of their result to all of $\BbD$ should be true and will follow by 
an elaboration of their same technique. Since that result has not appeared yet, our theorem above is only claimed at
present in that region of the compactified moduli space.  However, all other parts of the argument here are valid even 
near $\BbD^{\mathrm{sing}}$. 
\label{remarkMZ}
\end{rem}
Let us give a very brief sketch of the proof.  We first introduce the universal family $\calM_{\gamma,1}$, 
which is the space of Riemann surfaces with one marked point. This marked point traces out a copy of $\Si$ over each 
point $[g] \in \calM_\gamma$, and there is a forgetful map
\begin{equation}
\Pi: \calM_{\gamma,1} \longrightarrow \calM_\gamma;
\label{forgetful}
\end{equation}
the fibre $\Pi^{-1}([g])$ is a copy of that Riemann surface, i.e., of $\Si$ with an equivalence class of hyperbolic metrics. 
This is an orbifold fibration: up to finite covers of the domain and range, $\Pi$ is a fibration. Since these 
finite covers are irrelevant in our analysis, we often refer to \eqref{forgetful} as a fibration, with this understanding.
There is a corresponding map of Deligne-Mumford compactifications 
\begin{equation}
\overline{\Pi}: \overline{\calM_{\gamma,1}} \longrightarrow \overline{\calM_\gamma},
\label{forgetful2}
\end{equation}
which (again up to finite covers) is a {\it singular fibration}; in other words, it is a standard fibration away from
the divisors in the base, while if $[g_0] \in \overline{\calM_\gamma} \setminus \calM_\gamma$, then $\overline{\Pi}^{-1}([g_0])$
is a noded surface obtained by inserting a thrice-punctured (or thrice-marked) copy of $S^2$ between each of
the nodes of the degenerated surface $\Si_{[g_0]}$. 

The next step is to introduce a family of vertical hyperbolic metrics on $\Mgonec$, or in other words a choice of 
representative hyperbolic metric $g$ on the fiber $\Si_{[g]}$ for each $[g] \in \overline{\mathcal M_{\gamma}}$. 
Thus when $[g]$ lies in the interior, $g$ is an element in $\Met^{-1}$ 
representing $[g] \in \Met^{-1}/\Diff$, while if $[g] \in \Mgc\setminus \Mg$, then $g$ is a complete hyperbolic 
metric on the corresponding noded surface. There is probably not a completely natural choice of this representative, 
but the important feature here is that this choice depends smoothly on $[g]$ in the interior, and is polyhomogeneous
(say in the Fenchel-Nielsen coordinates) on the compactification.  In other words, in terms of one of the coordinate 
systems $w = (\ell,\omega,y)$ defined above, we seek a choice of representative $g$ depending smoothly on $w$ 
in the region where all $\ell_j > 0$ and which is polyhomogeneous as the $\ell_j \searrow 0$.  This has an obvious 
meaning away from collection of curves on $\Sigma$ which are degenerating at these divisors, and we refer to the next 
section for a full description. This local choice of metric representative is a slice of the diffeomorphism action, and 
we refer to any slice with these regularity properties as a polyhomogeneous slice.  

As we show later, it is in fact sufficient for most of the work below to work with a local polyhomogeneous slice where
the individual metrics are approximately hyperbolic, namely they are required to be hyperbolic on a neighborhood of 
the degeneration locus but have variable curvature elsewhere on $\Si$. These are easy to construct. 
Only at the final step do we appeal to the existence of a family of fibre-wise conformal factors $\varphi_{g}$ which
relate the approximately hyperbolic slice to an exactly hyperbolic one. The fact that these conformal factors depend
in a polyhomogeneous way on $[g]$ at $\BbD$ was proved recently by Melrose and Zhu \cite{MZ}. As already noted in
Remark \ref{remarkMZ}, we record the caveat that their paper proves this fact only away from the intersections of $\BbD$;
however, they expect their results to extend to the general case and have communicated that details should be forthcoming shortly. 

Having specified such a polyhomogeneous slice, we may then consider the bundle $\Stt$ over the coordinate 
neighborhood in $\Mgc$ whose fiber at $[g]$ consists of the transverse-traceless tensors $\Stt(g)$. 
Notice that the space $\Stt(g)$ only depends on the conformal class, i.e., is the same for $g$ and  $e^{2\varphi} g$. 
This is obviously a smooth bundle away from $\Mgc \setminus \Mg$, but we shall prove that there is a local frame of sections
$\kappa_i$ which are polyhomogeneous at the singular divisor in the base variables.  Since the area form $dA_g$ is 
also polyhomogeneous at this singular divisor, we see that every term in the integral on the right in \eqref{defWP}, cf.\ 
\eqref{Ebin}, is polyhomogeneous, when evaluated on this special frame.  Thus the matrix coefficients of $g_{\WP}$ in this
special frame are polyhomogeneous. The key point in finding a polyhomogeneous frame $\{\kappa_i\}$ is as follows. 
It is clear from the existence of a polyhomogeneous slice $g(w)$ that any partial derivative $\del_{w_i} g$ is polyhomogeneous, 
and each such infinitesimal variation of metrics is an element of $T_{g(w)} \Met^{-1}$. However, we must then project these to elements 
of $\Stt(g(w))$. The main issue then is to prove that the family of orthogonal projections
\[
T^g: T_g \Met^{-1} \longrightarrow T_{[g]} \Mg
\]
has a polyhomogeneous extension, in an appropriate sense, up to $\BbD$. 

We conclude by saying more about why we can work with only  an approximately hyperbolic slice until
the last step. We have already remarked that $\Stt$ is conformally invariant, and the inner product and area form transform
simply under conformal changes.  Thus if $g(w)$ is an approximately hyperbolic polyhomogeneous slice,
and if $\varphi(g(w))$ is the associated family of conformal factors so that $e^{2\varphi(g(w))}g(w)$ is hyperbolic, then we
can rewrite \eqref{Ebin} and \eqref{defWP} in terms of $g(w)$ as 
\begin{equation}
\langle \kappa_1, \kappa_2 \rangle_{\WP} = \int_\Sigma  \langle \kappa_1, \kappa_2 \rangle_{g(w)} e^{-2\varphi(w)} \, dA_{g(w)},
\label{defWP2}
\end{equation}
since the $\kappa_i$ are the same for either $g(w)$ or $e^{2\varphi(g(w))} g(w)$.  We then prove the main result as follows: 
each of the terms in this expression are polyhomogeneous, and the integral is a pushforward by a $b$-fibration, 
so by Melrose's pushforward theorem, the corresponding matrix coefficients of $g_{\WP}$ are polyhomogeneous in $w$. 
We explain this step later.

\section{Blowups and polyhomogeneous slices}\label{univbundle}
To describe and carry out the steps of the proof in more detail, we now introduce two auxiliary spaces which play a key 
role in this paper. These are resolutions of $\Mgc$, $\Mgonec$, and are the spaces on which the metrics, frames, etc., 
described at the end of \textsection \ref{sect:prelimin}  are actually polyhomogeneous. 

The first such space, $\Mgt$, is the {\it real} blowup of $\Mgc$ along the divisor $\DD$.  Assume, as before, 
that $[g]\in D_J$, where $J = \{1, \ldots, k\}$. Choose local holomorphic coordinates $(z_1, \ldots, z_N)$ 
in a neighborhood $\calU$ of $[g]$ so that each $D_j \cap \calU = \{z_j = 0\}$, $j = 1, \ldots, k$, and that 
$(z_{k+1}, \ldots, z_N)$ is a local holomorphic chart on $D_J$.  

Suppose first that $|J| = 1$, so $D_J = D_1$, and $[g]$ lies in a single divisor. The blowup of  $D_1 \cap \calU$
in $\calU$ is obtained by replacing each point $q \in D_1 \cap \calU$ by its normal circle bundle. This yields 
a manifold with boundary, where the boundary is the total space of a circle bundle over $D_1 \cap \calU$. More 
generally, when $|J| > 1$, this process is carried out along each one of the $D_j \cap \calU$, and it is clear from
the fact that the $D_j$ meet with simple normal crossings that the blowups in the different divisors are 
independent from one another. The part of the blowup over each intersection locus $D_J$ is a $(S^1)^{|J|}$ bundle.
This construction is well-defined globally and defines a manifold with corners which we call $\wtM_\gamma$.  To
be in accord with current usage, this space is actually not quite a manifold with corners for the simple reason
that some of the boundary hypersurfaces intersect themselves.  Since our considerations are local
on $\wtM_\gamma$, we may overlook this point, and for simplicity we may assume that we are working
on a manifold with corners. 

The interiors of the boundary hypersurfaces of $\wtM_\gamma$ are $S^1$ bundles over $\DD^{\reg}$, the set of points 
$q \in \DD$ which lie on only one divisor, and away from double-points. The codimension $k$ corners are $(S^1)^k$ 
bundles over the appropriate intersections of divisors $D_J$. Altogether, we write
\[
\Mgt = [ \Mgc;  \DD].
\]
This space has boundary hypersurfaces $H_j$, $j = 1,\ldots, N$, each corresponding to one of the divisors $D_j$, as
well as corners corresponding to the various intersections of the $D_j$.  We denote by $\beta$ the blowdown map $\Mgt \to \Mgc$. 

It is worth noting that the local holomorphic coordinates used here are not directly comparable to Fenchel-Nielsen coordinates, 
i.e., while we may set $y = (z_{k+1}, \ldots, z_N)$, it is not the case that each $(\ell_j,\omega_j)$ is a function of 
$\Re z_j, \Im z_j$. Nonetheless, the fibres $\beta^{-1}([g])$ for $[g] \in \DD$ are well-defined. In fact, defining the
polar coordinates $|z_j|$, $\arg z_j$, then both of the sets of coordinates $(|z|_1,\arg z_1, \ldots, |z|_k,\arg z_k, y)$
and $(\ell_1, \omega_1, \ldots, \ell_k, \omega_k, y)$ lift to smooth (actually, real analytic) coordinate systems on $\Mgt$, and 
these are  smoothly (real analytically) equivalent. 

We next consider the analogous construction for the compactified universal family $\Mgonec$.  As noted earlier,
up to finite covers, there is a singular fibration $\BPi: \Mgonec \to \Mgc$, and we wish to define a manifold with
corners obtained by blowing up certain submanifolds of $\Mgonec$ so that $\BPi$ lifts to a map
\[
\TPi:  \Mgonet \longrightarrow \Mgt
\]
which is a $b$-fibration between manifolds with corners.  To carry out this construction, we 
first blow down the noded $S^2$ components of the singular fibres $\overline\Pi^{-1}([g_0])$ in $\Mgonec$;
thus we replace these singular fibres by the union of noded Riemann surfaces which correspond to the 
complete hyperbolic metric $g_0$. The resulting space $(\Mgonec)'$ is the true universal family of $\Mgc$. (We could, 
of course, have bypassed the Deligne-Mumford compactification of $\Mgone$ and proceeded directly
to this smaller compactification.) The next step is to lift this singular fibration via the blowdown map
$\beta: (\Mgonec)' \to \Mgc$; the final step is to blow up the lifts of the nodes of these singular fibres in
the total space. The resulting space is denoted $\Mgonet$.

It is more transparent to describe all of this in local coordinates. For simplicity, consider this first near 
$\BbD^{\mathrm{reg}}$. Use the (lifted) Fenchel-Nielsen coordinates $(\ell,\omega,y)$ on $\Mgt$ as well as local coordinates 
$(\tau, \theta)$ on a neighborhood $\calC$ of the appropriate curve $c \subset \Si$ which degenerates at $[g_0]$.
We suppose that the local choice of approximately hyperbolic metrics $g$ on each fiber of $\BPi$ is adapted to these coordinates 
in the sense that $\calC$ is the open cylinder $\calC = (-1,1)_{\tau} \times S^1_{\theta}$ and 
\begin{equation}
\left. g \right|_{\calC} = \frac{d\tau^2}{\tau^2 + \ell^2} + (\tau^2 + \ell^2) \, d\theta^2.
\label{mhm}
\end{equation}
Note that \eqref{mhm} is indeed a hyperbolic metric and can be reduced to the more familiar form
\[
dt^2 + \ell^2 \cosh^2(t) \, d\theta^2
\]
by the change of coordinates $t = \operatorname{arcsinh} \tau/\ell$.  These cylindrical coordinates extend 
to the fibers of $(\Mgonec)'$ over $\DD$, but become dependent at the preimage of the nodal set
$\{ \ell = \tau = 0\}$.  This preimage has coordinates $y, \omega, \theta$, and its blowup reduces to the 
blowup of $(0,0)$ in $[0, \ell_0)_\ell \times (-1,1)_\tau$, with the remaining coordinates $(y, \omega, \theta)$ 
as parameters.  Let us use polar coordinates $\ell = \rho \sin \chi$, $\tau = \rho \cos \chi$, with
$\rho \geq 0$, $\chi \in [0,\pi]$. Altogether then, $(\rho, \chi, \omega, \theta, y)$ is
a coordinate system on $\Mgonet$. The new face $\rho = 0$ is called the front face.

It is useful to consider a reduced version of this construction where we replace the product $\calC \times [0,\ell_0)_\ell$ 
by its blowup
\begin{equation}
\wh{\calC} = [ \calC \times [0,\ell_0);  \{ \ell = \tau = 0\} ]. 
\label{defCh}
\end{equation}
(In other words, we suppress the coordinates $y$ and $\omega$ along the hypersurface boundary $H$ of $\Mgt$.
As before, the new face is called the front face of $\wh{\calC}$.  A neighborhood of the corresponding region in $\Mgonet$
is a product $\wh{\calC} \times S^1_\omega \times \calW_y$, where $\calW_y$ is a neighborhood of $[g_0]$ in $\BbD$. 
More generally, near crossing points of $\BbD$, this construction may be carried out independently near
each of the degenerating curves $c_j$. 


The space $\Mgonet$ has two types of boundary hypersurfaces. The first are closures of the hypersurfaces 
$H_j^{\reg} \times (\Sigma \setminus \fc_j)$, where the $H_j^{\reg}$ are the hypersurfaces in $\Mgt$ which
cover the components of $\BbD^{\reg}$, and the second are the faces $F_j$ obtained by blowing up 
$H_j^{\reg} \times \fc_j$.  Because of the self-intersections of the irreducible components of $\BbD$,
these boundary hypersurfaces of $\Mgonet$ may self-intersect at their boundaries, and for this reason,
$\Mgonet$ is slightly more general than a manifold with corners. However, our considerations are sufficiently
local that this does not affect anything here, and we shall think of this space as a manifold with corners.

The fibration $\Pi: M_{\gamma,1} \to M_\gamma$ extends to a $b$-fibration
\begin{equation}
\hat{\Pi}: \Mgonet \longrightarrow \wtM_\gamma.
\end{equation}
Briefly, a $b$-fibration is the most useful analogue of a submersion in the setting of maps between 
manifolds with corners. We refer to \cite{MelCCD} for the precise definition of $b$-fibrations.  
We appeal to this structure only once again, at the very end of this paper.  
The preimage of a point $q$ in the interior of $\Mg$ is the compact surface $\Sigma$. 
If $q \in H_j^{\reg}$, then $\wh{\Pi}^{-1}(q)$ is a union of the bordered surface obtained by
adding the boundary curves to $\Sigma \setminus \fc_j$ and the cylinder $[-1,1] \times S^1$,
which is the new face created by the final blowup.  We also define the vertical tangent bundle
$T^{\ver} \Mgonet$. Its fibres are the tangent planes to the fibres $\wh\Pi^{-1}(q)$ for $q$ in the interior;
over the boundary, however, these fibres are `broken', so the vertical tangent space is
either the tangent plane to the noded degeneration of $\Si$, or else to the tangent space of $\wh\calC$.

One of the key properties of this fibration is that the family of hyperbolic metrics $g_q$ on the vertical
tangent bundle extends naturally to a (degenerate) fibrewise metric on $\Mgonet$. Over the boundaries
of $\Mgonet$, these vertical hyperbolic metrics are either the complete finite area hyperbolic metrics,
over the noded degenerations of $\Si$, or else the complete (infinite area) hyperbolic metric
\[
\frac{dT^2}{1+T^2} + (1+T^2) d\theta^2
\]
on the front face of $\wh{\calC}$.   Melrose and Zhu \cite{MZ} prove the following 
\begin{prop}[\cite{MZ}] \label{prop:verticaltwotensor}
The family of metrics $g(w)$ on $\Sigma$ over the interior of $\Mg$ extends to a polyhomogeneous section of
the symmetric second power of the dual of the vertical tangent bundle of $\Mgonet$. 
\end{prop}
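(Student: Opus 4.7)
\smallskip
\noindent\textbf{Proof proposal.}
The plan is to reduce the claim to polyhomogeneity of a scalar conformal factor solving a semilinear elliptic equation on each fibre, and to establish the latter by a $b$-calculus parametrix construction on the resolved total space $\Mgonet$.

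First I construct a \emph{model} vertical symmetric $2$-tensor $g_0$ on $\Mgonet$ which is manifestly polyhomogeneous and exactly hyperbolic on a neighborhood of every front face. Near each degenerating curve $\fc_j$, in the coordinates of \eqref{mhm}, I set
\[
g_0 \;=\; \frac{d\tau_j^{\,2}}{\tau_j^{\,2}+\ell_j^{\,2}} + (\tau_j^{\,2}+\ell_j^{\,2})\,d\theta_j^{\,2},
\]
so that $K_{g_0}\equiv -1$ there; on the thick part of the nodal limit surface I let $g_0$ be an arbitrarily chosen hyperbolic metric depending smoothly on the coordinates $y$ transverse to $D_J$ (furnished by the standard Teichm\"uller theory of the nodal limit), and I glue the two pieces with a fixed cutoff. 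By construction $g_0$ is smooth in the interior and polyhomogeneous up to $\partial\Mgonet$, and $K_{g_0}+1$ is fibrewise compactly supported away from the degenerating curves.

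I then look for a conformal factor $\varphi\colon\Mgonet\to\R$ so that $g := e^{2\varphi}g_0$ is fibrewise hyperbolic; with the analyst's sign convention ($\Delta_{g_0}$ negative on functions), this is the equation
\[
\Delta_{g_0}\varphi \;=\; K_{g_0} + e^{2\varphi},
\]
with $\varphi$ required to vanish on the collars and to decay at $\partial\Mgonet$. The linearization at $\varphi=0$ is $L := \Delta_{g_0}-2$, which is fibrewise invertible (spectrum in $(-\infty,-2]$ in every case). I build a right parametrix $Q$ for the family $\{L(w)\}$ in a fibred $b$-calculus on $\Mgonet$: at the boundary face over a lift of $\BbD^{\reg}$ the normal operator of $L$ is $\Delta-2$ on the complete hyperbolic noded surface; at a front face $F_j$, after an appropriate rescaling, the normal operator is $\Delta-2$ on the complete hyperbolic cylinder $dt^2+\cosh^2 t\,d\theta^2$. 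Each normal operator is inverted on polyhomogeneous sections by separation of variables together with an explicit indicial-root analysis; the compatibility of normal operators at the corners of $\Mgonet$ where two faces meet is checked; and a formal Neumann series is summed so that $LQ-I$ is a polyhomogeneous remainder vanishing to infinite order at every boundary face. Applied to the polyhomogeneous forcing $K_{g_0}+1$, the parametrix $Q$ produces a polyhomogeneous approximate solution $\varphi^{(0)}$, and a contraction-mapping argument in a weighted polyhomogeneous Banach algebra promotes $\varphi^{(0)}$ to a genuine polyhomogeneous solution $\varphi$ of the nonlinear equation. Polyhomogeneity of $g = e^{2\varphi}g_0$ as a section of $\operatorname{Sym}^2(T^{\ver}\Mgonet)^*$ follows.

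The main obstacle is this parametrix construction, and specifically the treatment of the front faces $F_j$ together with their corners with the regular-divisor faces. The front-face normal operator is $L$ on an \emph{infinite-area} hyperbolic cylinder, whose two funnel ends must be asymptotically matched to the cusp ends of the noded surface appearing in the adjacent face. Tracking the combinatorics of the funnel indicial roots $\mu_\pm=\tfrac12(-1\pm 3)\in\{1,-2\}$ together with the cusp indicial roots on the noded surface, and of the induced index family at the higher-codimension corners of $\Mgonet$, is the delicate analytic step; this is precisely the content of the Melrose--Zhu resolution invoked here, and is the reason the proposition is nontrivial even once the model metric $g_0$ has been constructed.
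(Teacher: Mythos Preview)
The paper does not supply a proof of this proposition at all: it is attributed to Melrose--Zhu \cite{MZ}, and the only argument given in the surrounding text is a coordinate dictionary showing that the variables $(t,|z|)$ used in \cite{MZ} correspond, via $\ell=\pi/|\log t|$ and $\tau/\ell=\cot(\ell/|\log|z||)$, to the $(\ell,\tau)$ coordinates of \eqref{mhm}, so that log-smoothness in their sense is equivalent to the polyhomogeneity needed here. There is nothing further to compare against.

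Your outline is, in broad strokes, a faithful sketch of the strategy in \cite{MZ}: build a polyhomogeneous grafted model $g_0$ which is exactly hyperbolic on the collars, reduce to the prescribed-curvature equation for the conformal factor $\varphi$, linearize to $\Delta_{g_0}-2$, and invert this family uniformly by assembling normal-operator inverses at each boundary face of $\Mgonet$. You are also right that the substantive work sits entirely in the parametrix step, specifically in matching the funnel and cusp ends across the corner where a front face $F_j$ meets the face over the nodal surface; you correctly flag this rather than pretend it is routine.

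Two small corrections. First, you should not require $\varphi$ to \emph{vanish} on the collars: the forcing $K_{g_0}+1$ vanishes there, but $\varphi$ is determined globally on each closed fibre and will merely be small (indeed polyhomogeneous and decaying at the front face), not identically zero on any open set. Imposing $\varphi\equiv 0$ on the collars would over-determine the problem. Second, your statement that the contraction mapping takes place ``in a weighted polyhomogeneous Banach algebra'' elides a genuine issue: one typically first solves in a conormal/weighted H\"older space to get existence and uniform bounds, and only afterwards upgrades to full polyhomogeneity by iterating the equation against the formal parametrix. As written, your sketch is an accurate roadmap but not a proof; the paper is content to cite \cite{MZ} for exactly this reason.
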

\begin{rem}
As we have already noted, \cite{MZ} only establishes this polyhomogeneity near points of $\BbD^{\mathrm{reg}}$, but
not at the intersection locus of the divisors. They expect to complete the proof in that case soon as well. 
\end{rem}

We conclude this section with a brief explanation for how to make the translation between the results in \cite{MZ}
and what is needed here. 
We begin with a polyhomogeneous family of approximately hyperbolic metrics $\hat{g}(w)$ and then find
the conformal factor $\varphi(w)$ such that $e^{2\varphi(w)}\hat{g}(w)$ is hyperbolic. The family $\hat{g}(w)$
has been constructed to be polyhomogeneous. We indicate now why the main theorem of \cite{MZ} shows
that $\varphi(w)$ is as well. 

The notation in \cite{MZ} is as follows. The annulus $A$ is identified with the quadric $\{(z,w): zw = t\}$;
for simplicity here we assume that $t \in \RR$, $0 < t < 1/4$; this parameter is equivalent to the length
parameter $\ell$, see below. We parametrize half of this region by the annulus
$\{\sqrt{t}  \leq |z| \leq 1/2\}$. The family of hyperbolic metrics here is given as
\[
\left( \frac{\pi \log |z|}{\log t}\csc  \frac{\pi \log |z|}{\log t} \right)^2 \frac{|dz|^2}{|z|^2 (\log |z|)^2}.
\]
This agrees with \eqref{mhm} upon making the substitutions
\[
\ell = \frac{\pi}{|\log t|}, \qquad \frac{\tau}{\ell} = \cot \frac{\ell}{|\log |z||}.
\]
In \cite{MZ}, the radial variable $|z|$ is replaced by $1/|\log |z||$, so this change of variables shows
that polyhomogeneity (and indeed log smoothness) in this new logarithmic variable, as
proved in \cite{MZ}, is equivalent to polyhomogeneity (log smoothness) in $\tau$. 

\section{Global analysis and $\calM_\gamma$}\label{sect:modulispaces}
We now recall some standard facts about deformations of hyperbolic metrics on surfaces. The point of view adopted
here is the one promoted by Tromba \cite{Tromba}, and is the specialization to this low dimension of the deformation
theory of Einstein metrics. 

\subsection{Curvature equations and Bianchi gauge}
Consider the operator which assigns to a metric its Gauss curvature: $g\mapsto K^g$. This is a second order nonlinear 
differential operator, and since $\operatorname{Ric}^g=K^g g$ in dimension $2$, metrics of constant curvature are the same as Einstein
metrics in this setting. 

Let $(\Sigma^2,g_0)$ be a closed surface where $K^{g_0} \equiv -1$.  Nearby hyperbolic metrics correspond to solutions of 
\begin{equation}\label{eq:Einsteineq}
S^2(\Sigma, T^*\Sigma) \ni h \mapsto E^{g_0}(h)\coloneqq(K^{g_0+h}+1)\cdot(g_0+h)=0
\end{equation} 
with $h$ suitably small. The nonlinear operator $E^{g_0}$ is called the \emph{Einstein operator}. Since $\Sigma$ is compact, we can 
let $E^{g_0}$ act between appropriate Sobolev or H\"older spaces, but for simplicity we do not specify the function spaces precisely 
until necessary. 

The operator $E^{g_0}$ is not elliptic because it is invariant under the infinite-dimensional group $\operatorname{Diff}(\Sigma)$ of diffeomorphisms of $\Sigma$.
For any metric $g$, the tangent space of the $\operatorname{Diff}(\Sigma)$ orbit through $g$ consists of all symmetric $2$-tensors of the form 
$\mathcal L_X g$, where $X$ is a vector field on $\Sigma$, or equivalently, as $(\delta^{g})^* \omega$, where $\omega$ is the $1$-form
metrically dual to $X$. Here $\delta^g\colon S^2(\Sigma, T^*\Sigma) \to \Omega^1(\Sigma)$ is the divergence operator and 
$(\delta^{g_0})^*$ is its adjoint; in local coordinates, 
\[
((\delta^g)^{\ast}\omega)_{ij} =\frac{1}{2}(\omega_{i;j}+\omega_{j;i}).
\]
Note that $\tr^g(\delta^g)^{\ast}=-\delta^g\colon \Omega^1\to\Omega^0$, where $\delta^g\colon \Omega^1(\Sigma) \to \Omega^0(\Sigma)$ is the 
standard codifferential. The \emph{conformal Killing operator} is the projection of $(\delta^g)^{\ast}$ onto its trace-free part: 
\[
\mathcal D^g\omega\coloneqq (\delta^g)^{\ast}\omega+\frac{1}{2}\delta^g(\omega)g\colon\Omega^1(\Sigma)\to S_0^2(\Sigma, T^*\Sigma).
\]
This is the adjoint of $\delta^g\colon S_0^2(\Sigma, T^*\Sigma)\to\Omega^1(\Sigma)$.  It follows from all this that the nullspace of $\delta^g$ 
on $S^2(\Sigma, T^*\Sigma)$ equals the $L^2$-orthogonal complement of the tangent space of the diffeomorphism orbit passing through 
$g$, and furthermore that the system 
\[
h\mapsto(E^{g}(h),\delta^{g}(h))
\]
is elliptic. It is convenient to consider instead the single operator 
\[
N^g(h)\coloneqq E^g(h)+(\delta^{g+h})^{\ast}B^g(h)=(K^{g+h}+1)(g+h)+(\delta^{g+h})^{\ast}B^g(h),
\]
where 
\begin{equation}\label{eq:Bianchioperator}
h\mapsto B^{g}(h)\coloneqq\delta^{g}(h)+\frac{1}{2}d\tr^{g}h
\end{equation}
is the \emph{Bianchi operator}.  We say that $h$ is in Bianchi gauge if $B^{g}(h)=0$. Clearly, if $E^g(h) = 0$ and $B^g(h) = 0$, then 
$N^g(h) = 0$.  The converse, which is due to Biquard, is true as well.
\begin{prop}
Suppose that $g_0$ is hyperbolic. If $h\in S^2(\Sigma, T^*\Sigma)$ is sufficiently small and $N^{g_0}(h)=0$, then $g_0+h$ has constant 
Gauss curvature $-1$ and $B^{g_0}(h)=0$.
\end{prop}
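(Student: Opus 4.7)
\medskip

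\noindent\textbf{Proof plan.} Set $g := g_0 + h$ and $\omega := B^{g_0}(h)$, so that the hypothesis $N^{g_0}(h)=0$ reads
\[
(K^g + 1)\, g + (\delta^g)^* \omega = 0.
\tag{$\ast$}
\]
The goal is to show $\omega \equiv 0$, since once this is known, $(\ast)$ immediately forces $(K^g+1)g=0$, hence $K^g\equiv -1$.

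My strategy is to apply $B^g$ to $(\ast)$ and show that the contribution from the curvature term drops out identically, producing a linear elliptic equation for $\omega$. First I would verify that $B^g(f g) = 0$ for every scalar function $f$; this is a direct Leibniz computation using $\nabla^g g=0$ and $\tr^g g = 2$, giving $\delta^g(fg) = -df$ and $\tfrac12 d\tr^g(fg) = df$. Taking $f = K^g+1$ yields $B^g[(K^g+1) g] = 0$. Applying $B^g$ to $(\ast)$ thus leaves
\[
P^g \omega \;:=\; B^g (\delta^g)^* \omega \;=\; 0.
\]
(This is really the contracted second Bianchi identity for the Einstein tensor in disguise, and the vanishing is exactly what makes the $(\delta^g)^*B^g$ trick produce an elliptic gauge-fixed operator.)

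The next step is to observe that $P^g$ is a second-order elliptic operator on $\Omega^1(\Sigma)$ which depends continuously on $g$ in the appropriate Banach topology. A Weitzenböck computation at $g_0$ shows that $P^{g_0}$ differs from a positive multiple of the Hodge Laplacian $\Delta_H$ on $1$-forms by a zeroth-order term which, when $K^{g_0}\equiv -1$, gives $2 P^{g_0} = \Delta_H$ (up to sign/normalization conventions); more intrinsically, $P^{g_0}$ is conjugate to $(\mathcal D^{g_0})^* \mathcal D^{g_0}$ on the relevant piece, whose kernel consists of conformal Killing $1$-forms. Since $\Sigma$ has genus $\gamma \ge 2$, there are no nontrivial conformal Killing fields, so $\ker P^{g_0} = \{0\}$ and $P^{g_0}$ is an isomorphism (being self-adjoint Fredholm). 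By continuity of the resolvent, $P^g$ is invertible for $h$ sufficiently small in a suitable topology, whence $P^g\omega = 0$ forces $\omega = 0$. Plugging back into $(\ast)$ yields $K^g \equiv -1$ and $B^{g_0}(h) = \omega = 0$, as required.

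\medskip

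\noindent\textbf{Expected obstacle.} The only real subtlety is the identification of $\ker P^{g_0}$. The Leibniz step $B^g(fg)=0$ and the smallness/perturbation argument are formal once the operator is set up correctly, but extracting the correct Weitzenböck formula for $P^{g_0} = B^{g_0}(\delta^{g_0})^*$, and correctly keeping track of the trace-free versus pure $(\delta^g)^*$ parts so as to conclude that the kernel is governed by conformal Killing fields rather than by some larger space, is where the sign and normalization conventions matter. Once that identification is made, the nonexistence of conformal Killing vector fields on a closed hyperbolic surface of genus $\gamma \ge 2$ finishes the argument.
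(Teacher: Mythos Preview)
Your argument is correct and follows the paper's line up to the equation $P^{g}\omega = 0$ with $g = g_0+h$: both you and the paper apply $B^{g}$ to $(\ast)$ and use $B^{g}(fg)=0$ to kill the curvature term.  The difference is in how you conclude $\omega=0$.  You argue by perturbation: $P^{g_0}$ is invertible (via the Weitzenb\"ock identity, or by identifying its kernel with conformal Killing forms, which vanish in genus $\geq 2$), hence $P^{g}$ is invertible for $h$ small.  The paper instead works directly at $g = g_0+h$: from the Weitzenb\"ock formula $2P^{g} = \Delta^{g} - 2K^{g}$ and the Bochner identity one gets
\[
\langle 2P^{g}\omega,\omega\rangle = \|\nabla^{g}\omega\|^2 - 2K^{g}\|\omega\|^2,
\]
and since $K^{g_0}\equiv -1$ forces $K^{g_0+h}<0$ for $h$ small, positivity of $P^{g}$ follows immediately with no perturbation step.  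The paper's route is slightly sharper in that it gives an explicit quantitative reason (negative curvature) rather than an abstract openness argument, but your approach is perfectly valid and arguably more robust in settings where one does not have such a clean sign.  One small correction: your stated formula $2P^{g_0}=\Delta_H$ is off by the curvature term---with $K^{g_0}=-1$ one has $2P^{g_0}=\Delta^{g_0}+2$, which is of course still strictly positive, so your conclusion stands.
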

Before recalling Biquard's proof, recall that if $h = fg$ is pure trace, then 
\begin{equation}\label{eq:Bianchitracefree}
B^{g}(fg)=\delta^{g}(fg)+\frac{1}{2}d\tr^{g}(fg)=-df+df=0. 
\end{equation}
Thus applying $B^{g+h}$ to $N^{g}(h)$ yields 
\begin{equation}
B^{g+h}(E^g(h)) = B^{g+h} (\delta^{g+h})^* B^g(h) = 0.
\label{bt}
\end{equation}  
For any metric, write 
\begin{equation}
P^g\coloneqq B^g \circ (\delta^g)^{\ast}\colon\Omega^1(\Sigma)\to\Omega^1(\Sigma);
\label{defP}
\end{equation}
by a standard Weitzenb\"ock identity, 
\begin{equation}\label{eq:Weitzenboeck}
P^g=\frac{1}{2}(\Delta^g-2K^g),
\end{equation}
where $\Delta^g$ is the Hodge Laplacian on $1$-forms. 
\begin{proof}
It suffices to establish that, if $g = g_0$ is hyperbolic, then $\omega=B^{g_0}(h) =0$. By \eqref{bt}, $P^{g_0+h}\omega = 0$, or equivalently, by
\eqref{eq:Weitzenboeck}, 
\begin{equation*}
\langle 2P^{g_0+h}\omega,\omega\rangle=\|\nabla^{g_0+h}\omega\|^2-2K^{g_0+h}\|\omega\|^2 = 0. 
\end{equation*}
However, since $K^{g_0}=-1$, then if $h$ is sufficiently small, $K^{g_0+h}<0$. We conclude that $\omega=0$, 
and thus $E^{g_0}(h)=0$, as desired. 
\end{proof}

\subsection{Linearized curvature operators}
The linearizations of the curvature operators above are not hard to compute. As before, assume throughout 
that $K^{g_0} \equiv -1$. 
If $h=h^0+fg_0$ is the decomposition into trace-free and pure-trace parts, then 
\begin{equation}\label{eq:variationGausscurv}
DK^{g_0}(h)=(\frac{1}{2}\Delta^{g_0}+1)f+\frac{1}{2}\delta^{g_0}\delta^{g_0}h^0.
\end{equation}

We see directly from this that
\[
DE^{g_0}(k)=\big((\frac{1}{2}\Delta^{g_0}+1)f+\frac{1}{2}\delta^{g_0}\delta^{g_0} h^0\big)g_0, 
\]
and furthermore, 
\begin{equation}\label{eq:operatorLg}
DN^{g_0}(h) \coloneqq L^{g_0}(h)=\frac{1}{2}(\nabla^{\ast}\nabla-2)h^0+\big(\frac{1}{2}(\Delta^g+2)f\big)g.
\end{equation}
We call $L^{g_0}$ the \emph{linearized Bianchi-gauged Einstein operator}. Note finally that 
by differentiating the identity $B^{g+h}N^g(h)=P^{g+h}B^g(h)$ at $h=0$, we obtain
\[
B^gL^g=P^gB^g. 
\]
 
\subsection{Transverse-traceless tensors}
A key object in this paper is the space $\Stt =\Stt(g_0)$ of transverse-traceless tensors on the surface $\Sigma$ with
respect to the metric $g_0$; by definition, this is the nullspace of $L^{g_0}$.  This space represents the tangent space of $\calM_\gamma$ at $g_0$ and depends only on the conformal class $[g_0]$, cf.~Proposition \ref{prop:conformalinv} below. 

Using \eqref{eq:operatorLg}, $L^{g_0}(h^0+fg_0) = 0$ if and only if $f=0$ and $(\nabla^{\ast}\nabla-2)h^0=0$; by \eqref{eq:Weitzenboeck},
the second condition is equivalent to $\delta^{g_0} h^0=0$. Therefore 
\[
\ker L^{g_0} = \Stt(g_0) =\{h\in S^2(\Sigma, T^*\Sigma)\mid \delta^{g_0} h=0, \tr^{g_0} h=0\}
\]
and hence $\Stt$ is the tangent space to the submanifold 
of metrics with constant curvature $K \equiv -1$ in Bianchi gauge with respect to $g_0$. In particular, $\Stt$ is orthogonal 
to the diffeomorphism orbit through $g_0$.  It is straightforward to check that since $\dim \Sigma = 2$, $\delta^{g_0}$ is elliptic 
as a map between sections of $S^2_0(\Sigma, T^*\Sigma)$ and $\Omega^1(\Sigma)$, hence $\dim \Stt< \infty$ and consists of smooth elements. 
In fact
\begin{equation*}
\dim\Stt^g=6(\gamma-1);
\end{equation*}
this holds because there is a canonical identification of $\Stt$ with the space of holomorphic quadratic differentials on $\Sigma$. 

\begin{prop}\label{prop:conformalinv}
When $\dim \Sigma = 2$, $\Stt$ is conformally invariant; in other words, if $g_1=e^{2u}g$ then 
\begin{equation*}
\tr^{g_1}h=0,\delta^{g_1}h=0\quad\Longleftrightarrow\quad\tr^gh=0, \ \delta^gh=0.
\end{equation*}
\end{prop}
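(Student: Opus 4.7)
The plan is to verify that each of the two conditions $\tr^{g_1} h = 0$ and $\delta^{g_1} h = 0$ is conformally invariant in its own right, with the trace condition being immediate and the divergence condition requiring a short tensorial computation that uses $\dim \Sigma = 2$ in an essential way.

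First, the trace-free condition is trivial. Since $g_1^{ij} = e^{-2u} g^{ij}$, we have $\tr^{g_1} h = g_1^{ij} h_{ij} = e^{-2u}\, \tr^g h$, so $\tr^{g_1} h = 0$ if and only if $\tr^g h = 0$. This part uses no information about the dimension.

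Next, assuming $h$ is already trace-free with respect to one (hence both) of the metrics, I would compare the two divergences via the standard conformal transformation formula for the Christoffel symbols, $\Gamma^l_{ki}(g_1) - \Gamma^l_{ki}(g) = \delta^l_k u_{,i} + \delta^l_i u_{,k} - g_{ki}\, g^{lm}u_{,m}$. Writing the difference of covariant derivatives on a $(0,2)$-tensor and contracting with $g_1^{jk} = e^{-2u} g^{jk}$, one obtains
\[
(\delta^{g_1} h)_i = e^{-2u}(\delta^g h)_i + e^{-2u} g^{jk}\bigl(C^l_{ki} h_{lj} + C^l_{kj} h_{il}\bigr),
\]
where $C^l_{ki}$ denotes the Christoffel difference above. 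I would then carry out the contraction term by term, showing that the right-hand correction equals $u_{,i}\,\tr^g h + (2-n)\, h_i{}^k u_{,k}$ up to an obvious rearrangement. Trace-freeness kills the first term, and in dimension $n=2$ the factor $(2-n)$ kills the second. The upshot is the clean identity
\[
\delta^{g_1} h = e^{-2u}\, \delta^g h \quad \text{whenever } \tr^g h = 0 \text{ and } \dim \Sigma = 2,
\]
from which the claimed equivalence $\delta^{g_1} h = 0 \Leftrightarrow \delta^g h = 0$ is immediate.

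The only real obstacle here is bookkeeping the contractions carefully enough to see both the cancellation with $\tr^g h$ and the appearance of the factor $(2-n)$; there are no global analytic difficulties at all, since the statement is a pointwise tensorial identity. Alternatively, one can give a coordinate-free proof using the well-known identification of $\Stt(g)$ with the real part of the space of holomorphic quadratic differentials for the complex structure determined by $[g]$: since that complex structure depends only on the conformal class, so does the space of holomorphic quadratic differentials, hence so does $\Stt$. I would include the direct computation above as the primary argument because it is self-contained and does not rely on the identification with holomorphic differentials.
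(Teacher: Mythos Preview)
Your proposal is correct and follows essentially the same route as the paper: both arguments note that $\tr^{g_1} h = e^{-2u}\tr^g h$, then invoke the general conformal-change identity $\delta^{g_1} h = e^{-2u}\bigl(\delta^g h + (\tr^g h)\,du + (2-n)\,\iota(\nabla^g u)h\bigr)$ and observe that the two correction terms vanish when $h$ is trace-free and $n=2$. The only difference is that the paper simply quotes this identity, while you sketch its derivation via the Christoffel difference tensor; your alternative remark about holomorphic quadratic differentials is not used in the paper's proof.
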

\begin{proof}
The first part of the condition is obvious since $\tr^{g_1}h=e^{-2u}\tr^gh$.  The rest follows from the general identity   
\begin{equation*}
\delta^{g_1}h=e^{-2u}(\delta^gh+(\tr^gh)du+(2-n)\iota(\nabla^gu)h),
\end{equation*}
so since $n=2$,  if $\tr^gh=0$ and $\delta^gh=0$, then $\delta^{g_1}h=0$, as claimed.
\end{proof}

\subsection{Local deformation theory}
It is now standard to deduce some features of the local deformation theory of hyperbolic metrics. We refer to 
  \cite{MW} and \cite{Tromba} for complete proofs. 

If $(\Sigma,g_0)$ is a closed hyperbolic surface, as above, we describe the Banach space structure of all nearby Riemannian metrics $g$ 
(in the $\calC^{2,\alpha}$ topology) with $K_g  = -1$, and identify those metrics in this Banach submanifold which are in
a local slice with respect to the action of $\operatorname{Diff}(\Sigma)$. 

Write $\mathcal M^{\operatorname{hyp}}$ for the set of $\mathcal C^{2,\alpha}$ Riemannian metrics with curvature $-1$. 
\begin{lemma}
The space $\calM^{\operatorname{hyp}}$ is a Banach submanifold in $\calC^{2,\alpha}S^2(\Sigma ,T^{\ast}\Sigma)$. 
\end{lemma}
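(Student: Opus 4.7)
My plan is to exhibit $\calM^{\operatorname{hyp}}$ as the regular zero set of a smooth map between Banach spaces and apply the Banach space implicit function theorem.

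First, observe that the set $\calU$ of positive definite $\calC^{2,\alpha}$ sections of $\operatorname{Sym}^2 T^*\Sigma$ is an open subset of the Banach space $\calC^{2,\alpha}S^2(\Sigma, T^*\Sigma)$. Define
\[
F: \calU \longrightarrow \calC^{0,\alpha}(\Sigma), \qquad F(g) = K^g + 1.
\]
Since the Gauss curvature depends on two derivatives of the metric in a smooth (indeed real analytic) way on the coefficients and on the inverse of the metric (which is smooth on $\calU$), the map $F$ is smooth. By definition $\calM^{\operatorname{hyp}} = F^{-1}(0)$.

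Next I would verify that $F$ is a submersion at every $g_0 \in \calM^{\operatorname{hyp}}$, i.e., that $DF^{g_0} = DK^{g_0} : \calC^{2,\alpha}S^2(\Sigma, T^*\Sigma) \to \calC^{0,\alpha}(\Sigma)$ is surjective with a topological complement to its kernel. Using the formula \eqref{eq:variationGausscurv}, restrict $DK^{g_0}$ to the subspace of pure-trace tensors $\{ fg_0 : f \in \calC^{2,\alpha}(\Sigma)\}$, where
\[
DK^{g_0}(fg_0) = \left( \tfrac{1}{2}\Delta^{g_0} + 1 \right) f.
\]
Since $\Delta^{g_0}$ is the nonnegative Hodge Laplacian, the operator $\tfrac{1}{2}\Delta^{g_0}+1$ is self-adjoint, elliptic, and strictly positive, hence has trivial kernel and is a topological isomorphism $\calC^{2,\alpha}(\Sigma)\to \calC^{0,\alpha}(\Sigma)$ by standard Schauder theory. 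This gives surjectivity and moreover yields a bounded right inverse
\[
R: \calC^{0,\alpha}(\Sigma) \longrightarrow \calC^{2,\alpha}S^2(\Sigma, T^*\Sigma), \qquad R\psi := \bigl( \tfrac{1}{2}\Delta^{g_0} + 1 \bigr)^{-1}(\psi) \cdot g_0,
\]
so that $DF^{g_0} \circ R = \operatorname{id}$. The induced decomposition
\[
\calC^{2,\alpha}S^2(\Sigma, T^*\Sigma) = \ker DF^{g_0} \oplus \operatorname{Im} R
\]
exhibits $\ker DF^{g_0}$ as a complemented closed subspace.

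With both conditions in hand, the Banach space implicit function theorem (equivalently, the submersion theorem in infinite dimensions) implies that near each $g_0 \in \calM^{\operatorname{hyp}}$ the zero set $F^{-1}(0)$ is a smooth Banach submanifold whose tangent space at $g_0$ is precisely $\ker DK^{g_0}$. Since this holds at every $g_0$, the conclusion follows. The only real content is the positivity of $\tfrac{1}{2}\Delta^{g_0}+1$, which in turn depends on the sign of the curvature of $g_0$; I expect no further obstacle since $K^{g_0} \equiv -1$ is assumed.
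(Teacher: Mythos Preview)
Your argument is correct and is exactly the standard one: realize $\calM^{\operatorname{hyp}}$ as $F^{-1}(0)$ for $F(g)=K^g+1$, use \eqref{eq:variationGausscurv} on pure-trace variations to see that $DK^{g_0}$ restricted to $\{fg_0\}$ is the elliptic isomorphism $\tfrac12\Delta^{g_0}+1$, and apply the implicit function theorem. The paper itself does not supply a proof of this lemma; it simply states the result and its tangent space, referring to \cite{MW} and \cite{Tromba} for details, so there is nothing further to compare.

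One small inaccuracy in your closing remark: the positivity of $\tfrac12\Delta^{g_0}+1$ on \emph{functions} does not depend on the sign of $K^{g_0}$, since the scalar Hodge Laplacian is nonnegative on any closed Riemannian manifold. The hypothesis $K^{g_0}\equiv -1$ is needed elsewhere in the deformation theory (for instance, for the invertibility of $P^{g_0}$ on $1$-forms via \eqref{eq:Weitzenboeck}), but not for this step.
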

By the discussion above, 
\begin{multline*}
T_{g_0}\calM^{\operatorname{hyp}}=\{h\in \calC^{2,\alpha}S^2(\Sigma ,T^{\ast}\Sigma)\mid  DK^{g_0}h=0\}\\
=\{h=h^0+fg_0\in \calC^{2,\alpha}S^2(\Sigma,T^{\ast}\Sigma) \mid \tr^{g_0}h^0=0,\ (\Delta^{g_0}+2)f+\delta^{g_0}h^0=0\}.
\end{multline*} 
We now impose the gauge condition. 
\begin{lemma}
There is a constant $\varepsilon$ depending on $g_0$ such that the intersection of
\begin{equation*}
\calS_{g_0,\varepsilon} \coloneqq \calM^{\operatorname{hyp}} \cap\{g_0+h: B^{g_0}h=0,\ \|h\|_{2,\alpha}<\varepsilon\}
\end{equation*}
with the orbit of $\operatorname{Diff}^{3,\alpha}(\Sigma)$ is transverse at $g_0$. The space $\calS_{g_0,\varepsilon}$ is identified with the space of 
solutions $h$ to $N^{g_0}(h)=0$ with $\|h\|_{2,\alpha}<\varepsilon$, and its tangent space at $g_0$ equals $\Stt(g_0)$. 
\end{lemma}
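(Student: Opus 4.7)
My plan has three parts, corresponding to the three assertions of the lemma.

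\emph{Step 1: Identifying $\calS_{g_0,\varepsilon}$ as the zero set of $N^{g_0}$.} This is immediate from the preceding Proposition. If $B^{g_0}h=0$ and $K^{g_0+h}=-1$, then $N^{g_0}(h)=E^{g_0}(h)+(\delta^{g_0+h})^{\ast}B^{g_0}(h)=0+0=0$. Conversely, if $N^{g_0}(h)=0$ with $\|h\|_{2,\alpha}$ small, the Proposition delivers both $B^{g_0}h=0$ and $K^{g_0+h}=-1$, so $g_0+h\in\calM^{\operatorname{hyp}}$.

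\emph{Step 2: Submanifold structure and tangent space.} I would work inside the closed linear subspace
\[
\mathcal{B}=\{h\in\calC^{2,\alpha}S^2(\Sigma,T^{\ast}\Sigma):B^{g_0}h=0\}
\]
and apply the implicit function theorem to the scalar map $F:\mathcal{B}\to\calC^{0,\alpha}(\Sigma)$ given by $F(h)=K^{g_0+h}+1$. Decomposing $h=h^0+fg_0$, the Bianchi condition together with \eqref{eq:Bianchitracefree} forces $\delta^{g_0}h^0=0$, so \eqref{eq:variationGausscurv} collapses to
\[
DF(0)h=\tfrac{1}{2}(\Delta^{g_0}+2)f.
\]
The operator $\Delta^{g_0}+2$ is strictly positive, self-adjoint and elliptic on functions, hence a Banach isomorphism $\calC^{2,\alpha}(\Sigma)\to\calC^{0,\alpha}(\Sigma)$. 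Thus $DF(0)$ is surjective with kernel $\{h^0:\tr^{g_0}h^0=\delta^{g_0}h^0=0\}=\Stt(g_0)$. The implicit function theorem then produces, for some small $\varepsilon>0$, a smooth Banach submanifold $F^{-1}(0)\cap\{\|h\|_{2,\alpha}<\varepsilon\}$ whose tangent space at $g_0$ equals $\Stt(g_0)$; by Step~1, this submanifold coincides with $\calS_{g_0,\varepsilon}$.

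\emph{Step 3: Transversality with the $\operatorname{Diff}^{3,\alpha}(\Sigma)$-orbit.} The orbit through $g_0$ has tangent space $(\delta^{g_0})^{\ast}\calC^{3,\alpha}\Omega^1(\Sigma)$, and $(\delta^{g_0})^{\ast}$ is injective since a hyperbolic surface of genus $\gamma\ge 2$ carries no Killing fields. Given $h\in T_{g_0}\calM^{\operatorname{hyp}}=\ker DK^{g_0}$, I would use \eqref{eq:Weitzenboeck}, which at $g_0$ reads $P^{g_0}=\tfrac{1}{2}(\Delta^{g_0}+2)$ on $\Omega^1$, to solve $P^{g_0}\omega=B^{g_0}h$ uniquely; elliptic regularity gives $\omega\in\calC^{3,\alpha}\Omega^1(\Sigma)$. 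Setting $h':=h-(\delta^{g_0})^{\ast}\omega$, one has $B^{g_0}h'=B^{g_0}h-P^{g_0}\omega=0$ and $DK^{g_0}h'=0$, the latter because $(\delta^{g_0})^{\ast}\omega$ is tangent to the diffeomorphism orbit, along which $K$ is constant. By Step~2, $h'\in\Stt(g_0)$, so $h=h'+(\delta^{g_0})^{\ast}\omega\in\Stt(g_0)+(\delta^{g_0})^{\ast}\calC^{3,\alpha}\Omega^1(\Sigma)$, which is the required transversality.

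I do not foresee a conceptually hard step. The two key observations are the pointwise identity \eqref{eq:Bianchitracefree}, which annihilates the pure-trace contribution to the Bianchi gauge, and the strict positivity of $\Delta^{g_0}+2$ on both functions and $1$-forms, which makes each relevant elliptic equation uniquely solvable. The only bookkeeping concerns H\"older indices: $h\in\calC^{2,\alpha}$ gives $B^{g_0}h\in\calC^{1,\alpha}$, whence $\omega\in\calC^{3,\alpha}$, matching the regularity of the $\operatorname{Diff}^{3,\alpha}$-action.
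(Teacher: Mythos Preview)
Your argument is correct and complete. The paper itself does not supply a proof of this lemma: it is stated without proof, the surrounding section having already announced that ``we refer to \cite{MW} and \cite{Tromba} for complete proofs.'' So there is no in-paper proof to compare against.

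That said, your three-step argument is exactly the standard one those references contain. The key observations you isolate---that $B^{g_0}(h^0+fg_0)=\delta^{g_0}h^0$ via \eqref{eq:Bianchitracefree}, that $\Delta^{g_0}+2$ is invertible on functions (so the implicit function theorem applies to $h\mapsto K^{g_0+h}+1$ on the Bianchi slice), and that $P^{g_0}=\tfrac12(\Delta^{g_0}+2)$ is invertible on $1$-forms (so any $h\in\ker DK^{g_0}$ can be corrected by $(\delta^{g_0})^*\omega$ into Bianchi gauge)---are precisely the ingredients one needs. The H\"older bookkeeping is right: $h\in\calC^{2,\alpha}\Rightarrow B^{g_0}h\in\calC^{1,\alpha}\Rightarrow\omega\in\calC^{3,\alpha}$.

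One minor addendum you could record for completeness is that the sum in Step~3 is direct: if $(\delta^{g_0})^{\ast}\omega\in\Stt(g_0)\subset\ker\delta^{g_0}$, then $\|(\delta^{g_0})^{\ast}\omega\|_{L^2}^2=\langle\omega,\delta^{g_0}(\delta^{g_0})^{\ast}\omega\rangle=0$, so the slice and the orbit meet only at $g_0$ (locally). This is implicit in your remark about the absence of Killing fields but is worth making explicit, since transversality together with zero-dimensional intersection is what the subsequent slice theorem uses.
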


There is also a local slice theorem.  
\begin{thm}\label{thm:locslice}
There is a positive constant $\varepsilon$ depending on $g_0$ and a neighborhood $\calU$ of $\mathrm{id}$ in 
$\operatorname{Diff}^{3,\alpha}(\Sigma)$ such that the map
\begin{equation*}
\calU\times \calS_{g_0,\varepsilon}\ni (F,h)\mapsto F^{\ast}(g_0+h)
\end{equation*}
is a diffeomorphism onto some neighborhood of $g_0$ in $\calM^{\operatorname{hyp}}$. 
\end{thm}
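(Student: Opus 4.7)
The plan is to prove this via the inverse function theorem applied to the smooth map
$\Phi\colon \operatorname{Diff}^{3,\alpha}(\Sigma) \times \calS_{g_0,\varepsilon} \to \calM^{\operatorname{hyp}}$, $\Phi(F,h) = F^*(g_0 + h)$. The tangent space to $\operatorname{Diff}^{3,\alpha}(\Sigma)$ at $\id$ is $\calC^{3,\alpha}(\Sigma, T\Sigma)$, and by the preceding lemma $T_{g_0}\calS_{g_0,\varepsilon} = \Stt(g_0)$. A direct computation gives
$D\Phi|_{(\id,0)}(X,k) = \mathcal L_X g_0 + k = 2(\delta^{g_0})^* X^\flat + k$,
which takes values in $T_{g_0}\calM^{\operatorname{hyp}} = \ker DK^{g_0}$. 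The strategy is to show that this linearization is a bijective bounded operator onto $T_{g_0}\calM^{\operatorname{hyp}}$ and then invoke the inverse function theorem; standard arguments upgrade the resulting $\calC^1$ diffeomorphism to a smooth one.

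For injectivity, suppose $\mathcal L_X g_0 + k = 0$. Pairing in $L^2$ with $k$ and integrating by parts, the fact that $k$ is transverse-traceless yields $\langle \mathcal L_X g_0, k \rangle_{L^2} = 2 \langle X^\flat, \delta^{g_0} k \rangle = 0$, so $k = 0$; then $\mathcal L_X g_0 = 0$, and since a closed hyperbolic surface of genus $\gamma > 1$ admits no nonzero Killing fields, $X = 0$. For surjectivity, given $h \in T_{g_0}\calM^{\operatorname{hyp}}$, the operator $P^{g_0} = \frac{1}{2}(\Delta^{g_0} + 2)$ is strictly positive on $1$-forms by \eqref{eq:Weitzenboeck}, so I solve $2 P^{g_0} X^\flat = B^{g_0}(h)$ uniquely and set $k := h - 2(\delta^{g_0})^* X^\flat$. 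By construction $B^{g_0} k = 0$, and $k \in T_{g_0}\calM^{\operatorname{hyp}}$ since both $h$ and $\mathcal L_X g_0$ are. Writing $k = k^0 + f g_0$, identity \eqref{eq:Bianchitracefree} reduces $B^{g_0} k = 0$ to $\delta^{g_0} k^0 = 0$; inserting this into the linearized curvature constraint \eqref{eq:variationGausscurv} gives $(\Delta^{g_0} + 2) f = 0$, which forces $f \equiv 0$ by positivity of $\Delta^{g_0}$. Hence $k \in \Stt(g_0)$, establishing surjectivity.

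The technical bulk, and the main obstacle, is the Banach-manifold infrastructure underlying the inverse function theorem: one must verify that $\operatorname{Diff}^{3,\alpha}(\Sigma)$ is a smooth Banach Lie group, that the pullback $\Phi$ is $\calC^1$ between the specified H\"older spaces (with the regularity indices arranged so that the one-derivative loss in $(\delta^{g_0})^*$ is absorbed by the choice $3,\alpha$ versus $2,\alpha$), and that $\calM^{\operatorname{hyp}}$ is a Banach submanifold in a neighborhood of $g_0$ with the tangent space described earlier. With these ingredients secured and the linearization shown to be an isomorphism as above, the inverse function theorem produces $\varepsilon > 0$ and a neighborhood $\calU$ of $\id$ such that $\Phi$ restricts to a diffeomorphism onto a neighborhood of $g_0$ in $\calM^{\operatorname{hyp}}$. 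Complete details for each of these ingredients may be found in \cite{Tromba} and \cite{MW}.
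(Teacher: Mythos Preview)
The paper does not supply its own proof of this theorem: at the start of \S\ref{sect:modulispaces}.4 it declares ``We refer to \cite{MW} and \cite{Tromba} for complete proofs'' and then simply states the local slice theorem. Your outline via the inverse function theorem is exactly the standard argument found in those references, and the linear-algebra core you present (injectivity from the absence of Killing fields, surjectivity by inverting $P^{g_0}$ and then using \eqref{eq:variationGausscurv} together with $B^{g_0}k=0$ to kill the trace part) is correct. So your proposal is consistent with what the paper intends, and in fact supplies more detail than the paper itself.
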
  
In fact, since the genus $\gamma > 1$, one can show that the action of the entire connected component of the identity
of the diffeomorphism group acts properly. 
 
\subsection{The Weil-Petersson metric} 
We now use this formalism to describe the Weil-Petersson metric. 

Let $t\mapsto g_t$, $|t| <\varepsilon$, be a smooth path of hyperbolic Riemannian metrics through $g_0$. For $\varepsilon$ small, 
then Theorem \ref{thm:locslice} shows that there exists a unique $h_t\in S^2(\Sigma,T^{\ast}\Sigma)$ such that $N^{g_0}(h_t)=0$ and $h_t=F_t^{\ast}g_t$ for some diffeomorphism $F_t$, so in particular
the differential $\kappa:= \frac{d\, }{dt}\big|_{t=0}h_t$ lies in $\Stt(g_0)$. We now compute $\kappa$ in terms of 
$\dot g=\frac{d\, }{dt}\big|_{t=0}g_t$. Indeed, there exists a smooth family of diffeomorphisms $F_t$ and 
a smooth family of scalar functions $u_t$ such that 
\begin{equation}\label{eq:locslice}
g_0 +h_t=F_t^{\ast}e^{2u_t}g_t.
\end{equation}
Write $X=\frac{d\, }{dt}\big|_{t=0}F_t$ and $\dot u=\frac{d\, }{dt}\big|_{t=0}u_t$. Differentiating \eqref{eq:locslice} with 
respect to $t$ at $t=0$ yields
\begin{equation}
\kappa = L_Xg_0+2\dot ug_0+\dot g=(\delta^{g_0})^{\ast}\omega+2\dot u\, g_0+\dot g,
\label{kappa}
\end{equation}
where $X^\flat = \omega\in\Omega^1(\Sigma)$ is the $1$-form dual to $X$ with respect to $g_0$.  

We first determine $\omega$ as follows. Apply $B^{g_0}$ to \eqref{kappa} and recall \eqref{defP}, to get
\[
P^{g_0} \omega = - B^{g_0} \dot g,
\]
since $B^{g_0} \kappa = 0$. By \eqref{eq:Weitzenboeck}, $P^{g_0}$ is invertible, so denoting its inverse by $G^{g_0}$, we 
can write 
\begin{equation*}
\omega=-G^{g_0} B^{g_0} \dot g. 
\end{equation*}
Finally, with $\pi^{g_0}\colon S^2(\Sigma,T^{\ast}\Sigma)\to S_0^2(\Sigma,T^{\ast}\Sigma)$ the orthogonal projection onto trace-free tensors, we obtain that
\[
\kappa = - \pi^{g_0} (\delta^{g_0})^{\ast}  G^{g_0} B^{g_0} \dot g + \pi^{g_0} \dot g.
\]
In other words, $\kappa = T^{g_0} \dot g$, where the $L^2$-orthogonal projection $L^2 S_0^2(\Sigma,T^*\Sigma) \to\Stt$
is given by 
\begin{equation}
T^{g_0}=\pi^{g_0} \circ \left( \id-(\delta^{g_0})^{\ast}G^{g_0} B^{g_0}\right).
\label{defT}
\end{equation}

This now gives the formul\ae\ for the Weil-Petersson norm and inner product: 
\begin{equation}\label{eq:WPmetric}
\| \kappa\|^2_{\WP} = \| T^{g_0} \dot g \|^2_{L^2(\Sigma,dA_{g_0})}, \quad
\langle \kappa_1, \kappa_2 \rangle_{\WP} = \langle T^{g_0} \dot g_1, T^{g_0} \dot g_2 \rangle_{L^2(\Sigma,dA_{g_0})}.
\end{equation}



Our goal in the remainder of this paper is to analyze these expressions near the singular divisors of $\calM_\gamma$. 
Our main result is that there is a basis of sections of $\Stt$ which depends in a polyhomogeneous manner on $g_0$.
The main issue is to prove that the operator $G^{g_0}$ is polyhomogeneous in $g_0$.

\section{Hyperbolic cylinders}
The first step in this analysis is to study this problem for a model family of finite hyperbolic cylinders $(\calC,g_\ell)$ 
where the length $\ell$ of the central geodesic decreases to $0$. We have already described these metrics in \eqref{mhm} above,
and we note here that either component of the boundary $\del \calC = \{\tau= \pm 1\}$ is at distance 
$\operatorname{arcsinh} (1/\ell) \sim|\log(\frac{\ell}{2})|$ from the central geodesic $\{\tau=0\}$.

We consider here the family of inverses $G^{g_{\ell}}$ to the operators $P^{g_{\ell}}$ on $\calC$ and shall prove that it is polyhomogeneous 
in a particular sense as $\ell \searrow 0$. This analysis is explicit and we use it later to construct a parametrix for $P^g$ 
on families of degenerating compact hyperbolic surfaces. 

It is here that the blowup $\wh{\calC}$ of $\calC \times [0,\ell_0)$, which we introduced in \eqref{defCh}, becomes 
important. Indeed, the family $G^{g_\ell}$ has a rather complicated structure at $\ell = 0$; the most precise description of
this behaviour, which was studied carefully in \cite{ARS}, is that it is polyhomogeneous on a space obtained by a sequence of blowups 
of $\calC \times \calC \times [0, \ell_0)$. Fortunately we do not need all of this, and shall only require a much weaker form
of this, which we can prove directly. Namely, we show that if the family of $1$-forms $h_\ell$ on $\calC \times [0,\ell_0)$ vanishes 
in a neighbourhood $|\tau| \leq c$ for all $\ell$ and is polyhomogeneous on this product space at $\ell = 0$, then the family of 
solutions $\omega_\ell = G^{g_\ell} h_\ell$ lifts to be polyhomogeneous on $\wh{\calC}$.  We say in either case that $h_\ell$ and 
$\omega_\ell$ are polyhomogeneous in $\ell$, the passage to $\wh{\calC}$ being implied.  We can sidestep the full machinery 
of \cite{ARS} precisely because $h_\ell$ vanishes near $\tau = 0$, since this means that $G^{g_\ell} h_\ell$ does not depend on the 
behaviour of the Schwartz kernel $G^{g_\ell}(\tau, \tilde{\tau}, \ell)$ near $\tau = \tilde{\tau} = \ell = 0$, where its structure 
is particularly complicated. 

\subsection{The operator $P^{g_{\ell}}$}\label{subsect:operatorP}
We now compute the action of $P^{g_{\ell}}$ on $1$-forms.  This is simplest if we express any such form as a
combination $u \rho_1 + v \rho_2$, where $\rho_1 = \sigma_1 - i \sigma_2$, $\rho_2 = \sigma_1 + i \sigma_2$, and 
\begin{equation}
\sigma_1=\frac{d\tau}{\sqrt{\tau^2+\ell^2}},\qquad\sigma_2=\sqrt{\tau^2+\ell^2}\,d\theta.
\end{equation}
Introducing the Fourier decompositions $u = \sum u_k e^{ik\theta}$, $v = \sum v_k e^{ik\theta}$, then a short
computation gives that the operator induced by $P^{g_{\ell}}$ on the $k^{\mathrm{th}}$ Fourier mode of the column 
vector $(u_k, v_k)^T$ is
\begin{equation}\label{eq:operatorQ}
P_{\ell,k} =\frac{1}{2}\begin{pmatrix} P_{\ell,k}^+&0\\0& P_{\ell,k}^-\end{pmatrix},
\end{equation}
where 
\[
P_{\ell,k}^\pm = -(\tau^2 + \ell^2)\frac{d^2\,}{d\tau^2} -2\tau \frac{d\,}{d\tau} + 1 + \frac{(\tau \pm k)^2}{\tau^2+\ell^2}. 
\]

We are only interested in solving $P^{g_{\ell}} \, \omega = h$, and understanding the precise asymptotics
of this solution as $\ell \searrow 0$, when $h$ is polyhomogeneous and also vanishes near $\tau = 0$, 
and this provides a substantial simplification of the discussion below.  This analysis separates into one 
for the eigenmode $k=0$ and another for the sum of all the other eigenmodes together. 

The latter case is slightly simpler.
\begin{prop}\label{prop:inversenonzeromodes}
Suppose that $h$ is polyhomogeneous on $\wh\calC$ and vanishes for $|\tau| \leq c < 1$. Suppose too 
that the eigenmode $h_0 = 0$. Then the unique solution to $P^{g_{\ell}} \omega = h$ with $\omega(\pm 1, \theta) = \eta_\pm$
specified (and polyhomogenous in $\ell$) is polyhomogeneous on $\wh\calC$ and vanishes rapidly at the front
face of $\wh{\calC}$. 
\end{prop}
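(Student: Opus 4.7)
The plan is to reduce to ODEs by Fourier series in $\theta$, using the block-diagonal decomposition \eqref{eq:operatorQ}, and then analyze each mode $k \neq 0$ of the resulting family $P_{\ell,k}^\pm$ separately. For each fixed $(\ell, k)$ with $\ell > 0$, $P_{\ell,k}^\pm$ is a regular second-order linear ODE on $(-1,1)$, and the boundary value problem with data $\eta_\pm$ at $\tau = \pm 1$ has a unique solution by standard theory. The content of the proposition is the asymptotic behavior of this solution as $\ell \searrow 0$, lifted to $\wh\calC$.

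The first substantive step is to construct a pair of linearly independent homogeneous solutions $u^{(+)}_{\ell,k}, u^{(-)}_{\ell,k}$ of $P_{\ell,k}^\pm u = 0$ whose asymptotics on $\wh\calC$ are under control. Away from the front face, the coefficients of the ODE are smooth in $(\tau, \ell)$ and classical ODE theory gives a smooth-in-$\ell$ basis. Near the front face I pass to the polar coordinates $\tau = r\cos\chi$, $\ell = r\sin\chi$ used in the definition of $\wh\calC$; in these variables $P_{\ell,k}^\pm$ becomes a second-order ODE in $\chi$ (with $r$ as parameter) whose leading term as $r \to 0$ is the potential $k^2/r^2$ times a bounded $\chi$-dependent factor. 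A WKB/indicial analysis then yields one branch $u^{(-)}_{\ell,k}$ which decays to infinite order in $r$, schematically of the form $e^{-|k|\Phi(\chi)/r}$ with $\Phi > 0$ multiplied by a polyhomogeneous amplitude, together with a complementary growing branch $u^{(+)}_{\ell,k}$.

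The solution of the inhomogeneous problem is then written by variation of parameters,
\[
u(\tau) = \alpha_k u^{(+)}_{\ell,k}(\tau) + \beta_k u^{(-)}_{\ell,k}(\tau) + \int_{-1}^{1} G^{(k)}_\ell(\tau,\tilde\tau)\, f_k(\tilde\tau)\, d\tilde\tau,
\]
with $\alpha_k, \beta_k$ determined polyhomogeneously from the prescribed boundary data. The hypothesis that $h$ vanishes for $|\tau| \leq c$ is essential here: it confines the support of the integrand to a region bounded away from the front face in $\tilde\tau$, so the complicated diagonal Schwartz kernel asymptotics of \cite{ARS} are never probed. At points $(\tau,\theta)$ approaching the front face the value of $u$ is therefore a linear combination of the two branches, and the coefficient of the growing branch must itself vanish rapidly in $r$ (else $u$ would be unbounded), so that all contributions vanish to infinite order at $r = 0$. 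Polyhomogeneity on $\wh\calC$ then follows from the polyhomogeneous structure of the two branches and of the connection coefficients.

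Finally, reassembling the full $1$-form $\omega$ from its Fourier coefficients requires uniform-in-$k$ bounds. The potential $k^2/r^2$ produces exponential decay $\exp(-c|k|/r)$ on the chosen branch, which is ample to justify termwise differentiation and summation in $k$. The part of the argument I expect to be the main obstacle is making the uniform-in-$k$ polyhomogeneous estimates precise: one must show that each coefficient in the expansion of $G^{(k)}_\ell$ at the front face grows only polynomially in $k$, since otherwise the Fourier resummation would destroy the polyhomogeneous structure. Rescaling $\tau \mapsto k\tau$ so that $1/|k|$ appears as a semiclassical parameter in the ODE gives the natural framework for this, but weaving the WKB expansion together with the polyhomogeneous bookkeeping at $r = 0$ is the delicate piece of the argument.
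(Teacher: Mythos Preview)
Your reduction to the scalar ODEs $P_{\ell,k}^\pm$ is the same as in the paper, but from that point on the two arguments diverge substantially, and your version contains a genuine gap.

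The problematic step is the sentence ``the coefficient of the growing branch must itself vanish rapidly in $r$ (else $u$ would be unbounded).'' This is circular. For each fixed $\ell>0$ the ODE is regular on $[-1,1]$ and the solution determined by the boundary data at $\tau=\pm 1$ is automatically smooth; there is no interior boundedness condition to impose. What is at issue is precisely whether $\omega_k$ remains uniformly bounded as $\ell\searrow 0$, and that is exactly the conclusion you are trying to reach. To rescue your approach you would have to actually compute the connection coefficients $\alpha_k,\beta_k$ by matching at $\tau=\pm c$ (or equivalently, estimate the Dirichlet Green kernel $G^{(k)}_\ell(\tau,\tilde\tau)$ for $|\tau|$ small and $|\tilde\tau|\geq c$) and show directly that the contribution of the growing branch is $O(e^{-c'|k|/r})$. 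This can be done, but it is real work, and your sketch does not do it; the semiclassical rescaling you mention at the end is the right tool, but it is not a substitute for the missing computation.

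The paper bypasses all of this with a two-step maximum principle argument that is both shorter and more elementary. First, since the zeroth-order coefficient $1+(\tau\pm k)^2/(\tau^2+\ell^2)$ of $P_{\ell,k}^\pm$ is $\geq 1$, the constant $C=\max(\|h_k\|_\infty,|\eta_{\pm,k}|)$ is a supersolution, so $|\omega_k|\leq C$ uniformly in $\ell$. Second, on the interval $|\tau|\leq c$ where $h_k\equiv 0$, the explicit function
\[
\zeta_k(\tau)=C\,\exp\!\Big(\alpha|k|\big(\tfrac{1}{c}-\tfrac{1}{|\tau|}\big)\Big)
\]
satisfies $P_{\ell,k}^\pm\zeta_k\geq 0$ for $\alpha\in(0,1)$ small enough (a direct computation), and equals $C$ at $|\tau|=c$, so the maximum principle gives $|\omega_k|\leq\zeta_k$. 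Summing the geometric series $\sum_{k\neq 0}\zeta_k$ then yields the rapid vanishing of $\omega$ at the front face. No WKB, no Green kernel, no uniform-in-$k$ polyhomogeneous bookkeeping is needed; the barrier already encodes the $e^{-\alpha|k|/|\tau|}$ decay uniformly in $\ell$ and summably in $k$.
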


\begin{proof}
We have reduced the problem to a scalar one, involving the operators $P_{\ell,k}^\pm$, and so we first prove 
the result for any one of these. Let $P_{\ell,k}^+ \omega_k = h_k$, where $\omega_k(\pm1)=\eta_{\pm_k}$ and  
$h_k$ vanishes for $|\tau|\leq c<1$. Choose a constant $C>0$, independent of $k$, such that
\begin{equation*}
C\geq\max\{\|h_k\|_{L^{\infty}},\omega(\pm1)\}.
\end{equation*}
It follows that the 
function $\tilde\omega_k=\omega_k-C$ satisfies $\tilde\omega_k(\pm1)\leq0$ and
\begin{equation*}
P_{\ell,k}^+\tilde\omega_k=h_k-C\left(1+\frac{(\tau+k)^2}{\tau^2+\ell^2}\right)\leq h_k-C\leq0.
\end{equation*}
Thus by the maximum principle, $\tilde\omega_k\leq0$, or equivalently $\omega_k\leq C$.  Similarly,
$\omega_k \geq -C$.  Now we find a sharper barrier function. Set
\begin{equation*}
\zeta_k=Ce^{\alpha|k|\big(\frac{1}{c}-\frac{1}{|\tau|}\big)},
\end{equation*}
where $C$ is the same constant as above, and compute that
\begin{multline*}
P_{\ell,k}^+ \zeta_k = \left( \Big( \frac{1}{\tau^2+\ell^2}-\frac{\alpha^2(\tau^2+\ell^2)}{\tau^4}\Big)k^2  \right. + \\
\left. \Big (\frac{2\alpha\ell^2}{\tau^3}+\frac{2\tau}{\tau^2+\ell^2} \Big)|k|\tau +1+\frac{\tau^2}{\tau^2+\ell^2}\right) 
\zeta_k \geq 0
\end{multline*}
for all $k \neq 0$ and $|\tau| \leq  1$ provided $\alpha \in (0,1)$ is chosen sufficiently small.  
Consider the homogeneous equation $P_{\ell,k}^+ \omega_k=0$ on $|\tau|\leq c$. Then 
\begin{equation*}
\zeta_k(\pm c)=C\geq |\omega_k| \quad\textrm{and}\quad P_{\ell,k}^+(\zeta_k-\omega_k)\geq 0, \quad P_{\ell,k}^+(\omega_k - \zeta_k)\leq 0, 
\end{equation*}
hence
\begin{equation*}
|\omega_k| \leq \zeta_k.
\end{equation*}
This same estimate holds also for $P_{\ell,k}^-$. 

Altogether, summing over all nonzero $k\in\Z$, and using that $\omega_0 = 0$, we see that the solution to the original problem 
$P^{g_{\ell}}\omega=h$ satisfies 
\begin{equation*}
|\omega(\tau,\theta)|\leq\sum_{k\neq0}|\omega_k(\tau)|
\leq\sum_{k\neq0} Ce^{\alpha|k|\big(\frac{1}{c}-\frac{1}{|\tau|}\big)}\leq\frac{Ce^{\alpha\big(\frac{1}{c}-\frac{1}{|\tau|}\big)}}{1-e^{\alpha\big(\frac{1}{c}-\frac{1}{|\tau|}\big)}},
\end{equation*}
and this decays rapidly as $\tau\to 0$, uniformly in $\ell$. 
\end{proof}

We now discuss the case $k=0$ and analyze the family of operators
\begin{equation*}
P_{\ell,0}^+=P_{\ell,0}^-=-(\tau^2+\ell^2)\frac{d^2}{d\tau^2}-2\tau\frac{d}{d\tau}+1+\frac{\tau^2}{\tau^2+\ell^2}.
\end{equation*}
The change of variables  $T=\frac{\tau}{\ell}$ transforms this to 
\begin{equation*}
P_0=-(T^2+1)\frac{d^2}{dT^2}-2T\frac{d}{dT}+1+\frac{T^2}{T^2+1}.
\end{equation*}
This has two linearly independent homogeneous solutions 
\begin{equation*}
u_{\ast}=\sqrt{T^2+1},\qquad v_{\ast}=\frac{1}{\sqrt{T^2+1}}\left(T+\arctan(T)+T^2\arctan(T)\right),
\end{equation*}
hence $u_{\ell}(\tau)=u_{\ast}(\frac{\tau}{\ell})$ and $v_{\ell}(\tau)=v_{\ast}(\frac{\tau}{\ell})$ are solutions of $P_{\ell,0}^{\pm}u=0$. 
By direct inspection, these functions are polyhomogeneous on $\wh{\calC}$. 

We now write the unique solution to the inhomogeneous problem $P_{\ell,0}^{\pm}\omega =h$ with boundary values 
$\omega(\pm1)=\eta^{\pm}$ as
\begin{equation}\label{eq:explicitinverse}
\begin{split}
\omega(\tau) & = A u_\ell + B v_\ell \  +  \\ 
&\left( \frac{u_\ell(\tau)}{2\ell}\int_{-1}^{\tau} v_{\ell}(\sigma)\frac{h(\sigma)}{\sigma^2 + \ell^2}\,
d\sigma - \frac{v_\ell(\tau)}{2\ell}\int_{-1}^{\tau} u_{\ell}(\sigma)\frac{h(\sigma)}{\sigma^2 + \ell^2}\,d\sigma \right).
\end{split}
\end{equation}
The constants $A=A(\ell,h,\eta^{\pm})$ and $B=B(\ell,h,\eta^{\pm})$ are given by 
\begin{equation*}
\begin{pmatrix}A \\B \end{pmatrix}=\frac{1}{D}\begin{pmatrix} v_{\ell}(-1)&-v_{\ell}(1)\\-u_{\ell}(-1)&u_{\ell}(1)  \end{pmatrix} 
\begin{pmatrix}\eta^++u_{\ell}(1)I_1-v_{\ell}(1)I_2\\\eta^-+u_{\ell}(-1)I_1-v_{\ell}(-1),
I_2\end{pmatrix}.
\end{equation*}
where 
\begin{equation*}
I_1=\frac{1}{2\ell}\int_{-1}^1v_{\ell}(\tau)h(\tau)\,d\tau,\qquad I_2=\frac{1}{2\ell}\int_{-1}^1u_{\ell}(\tau)h(\tau)\,d\tau, 
\end{equation*}
and $D=u_{\ell}(1)v_{\ell}(-1)-u_{\ell}(-1)v_{\ell}(1)$. 

\begin{prop}\label{eq:inversecylinder}
Suppose that  $h$ is polyhomogeneous on $\wh{\calC}$ and vanishes for $|\tau|\leq c<1$. Then the unique solution to 
$P_{\ell,0}^{\pm}\omega=h $ with given boundary values $\omega (\pm1)=\eta^{\pm}$ (which may also be 
polyhomogeneous in $\ell$) lifts to a polyhomogeneous function on $\wh{\calC}$. 
\end{prop}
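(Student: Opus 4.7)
The plan is to read polyhomogeneity of $\omega$ directly off the explicit variation-of-parameters formula \eqref{eq:explicitinverse}, verifying each ingredient in turn. The crucial simplification is the support hypothesis on $h$, which confines all integration to $|\sigma|\geq c$ and so keeps it bounded away from the blown-up corner $\{\tau=\ell=0\}$; this is what allows a direct computation rather than the full polyhomogeneous kernel analysis of \cite{ARS}.

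First, I would verify that $u_\ell(\tau)=u_*(\tau/\ell)$ and $v_\ell(\tau)=v_*(\tau/\ell)$ are polyhomogeneous on $\wh{\calC}$. In the polar coordinates $\tau=\rho\cos\chi$, $\ell=\rho\sin\chi$ the ratio $T=\tau/\ell=\cot\chi$ is smooth on the interior of the front face and blows up only at the lifts $\chi=0,\pi$ of the two original faces $\{\ell=0,\,\tau\gtrless 0\}$. From $u_*(T)=\sqrt{T^2+1}$ and, using $\arctan T=\pm\pi/2\mp 1/T+\cdots$ as $T\to\pm\infty$, $v_*(T)=\tfrac{\pi}{2}T+\tfrac{\pi}{4T}+O(T^{-3})$, one reads off explicit polyhomogeneous expansions at those boundary faces, with index sets allowed to include bounded negative integer starting orders at the $\{\chi=0,\pi\}$ faces.

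Second, exploiting the parities $u_*(-T)=u_*(T)$ and $v_*(-T)=-v_*(T)$, a short computation gives $D=-2u_\ell(1)v_\ell(1)$ with leading asymptotic $-\pi/\ell^2$, so $D$ is polyhomogeneous and nonvanishing in $\ell$ and $1/D$ is polyhomogeneous in $\ell$. For the variation-of-parameters integrals themselves, the support hypothesis forces $\sigma\in[-1,-c]\cup[c,1]$, on which $1/(\sigma^2+\ell^2)$ is smooth in both variables and the expansions of $u_\ell(\sigma),v_\ell(\sigma)$ from Step 1 are uniform in $\sigma$. Hence the integrands are polyhomogeneous in $\ell$ with smooth $\sigma$-dependence; integrating from $-1$ to $\tau$ then yields polyhomogeneous $(\tau,\ell)$-dependence, which is constant in $\tau$ on $|\tau|\leq c$ and smoothly $\tau$-dependent on $|\tau|\geq c$. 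Combined with the polyhomogeneity of $1/D$ and of $\eta^\pm$, this shows that $A,B$ are polyhomogeneous in $\ell$; multiplying back through $u_\ell,v_\ell$ yields $\omega$ polyhomogeneous on $\wh{\calC}$.

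The main bookkeeping obstacle will be tracking the index sets consistently through these products, since the (possibly negative) starting orders carried by $u_\ell,v_\ell$ and $1/D$ at the various boundary faces must combine correctly with the (positive) orders produced by the integrals. But because the support condition on $h$ keeps all integrations in a region where $\ell\searrow 0$ is the only degeneration present, this reduces to a direct check on explicit asymptotic series rather than an appeal to the polyhomogeneous calculus on $\calC\times\calC\times[0,\ell_0)$, which is precisely the shortcut the proposition offers.
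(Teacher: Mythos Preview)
Your proposal is correct and follows essentially the same route as the paper: read polyhomogeneity directly from the explicit variation-of-parameters formula \eqref{eq:explicitinverse}, using that $u_\ell,v_\ell$ are already known to be polyhomogeneous on $\wh{\calC}$ and that the support hypothesis confines all integrals to $|\sigma|\geq c$, so they have expansions in $\ell$. The paper's proof is considerably terser than yours, but your additional checks (parity of $u_*,v_*$, nonvanishing of $D$, index-set bookkeeping) are the natural details one fills in and do not constitute a different method.
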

\begin{proof}
We have already noted that $u_\ell$ and $v_\ell$ are polyhomogeneous, so we may concentrate on the other terms in the
solution.  By assumption, $h$ is polyhomogeneous. Since $h$ vanishes for $|\tau| \leq C$,
it is also clear that each of the integrals, including the ones in the definitions of $I_1$, $I_2$, also has an expansion in  
$\ell$ as $\ell \searrow 0$. This proves the claim.
\end{proof}

As a final remark, let us compute 
\[
\pi_{\ell}\circ(\delta^{g_{\ell}})^{\ast} (\omega(\tau)\sigma_1) 
\]
in the region $|\tau|\leq c$, for $\omega = u_\ell$ or $v_\ell$; observe that this is the expression in the projection 
formula \eqref{defT}. We see that
\begin{equation*}
\pi_{\ell}\circ(\delta^{g_{\ell}})^{\ast}u_{\ell}\sigma_1=0\ \ \textrm{and}\quad \pi_{\ell}\circ(\delta^{g_{\ell}})^{\ast}v_{\ell}\sigma_1=
-\frac{2\ell^2}{(\tau^2+\ell^2)^2}\,d\tau^2+2\ell^2\, d\theta^2; 
\end{equation*}
note that these are scalar multiples of the transverse-traceless tensor $\kappa_{\ell,0}$ which appears in \eqref{eq:tttensors} below.
Similarly, with $\omega = u_\ell, v_\ell$, then $\omega(\tau)\sigma_2$ is mapped to a multiple of the transverse-traceless tensor 
$\nu_{\ell,0}$.

\subsection{Symmetric transverse-traceless two-tensors}\label{sect:symmtttensors}
We explicitly determine the space $\mathcal S_{\operatorname{tt}}^{g_{\ell}}$ of symmetric transverse-traceless $2$-tensors for the 
hyperbolic metric $g_{\ell}$ on $\calC$ and study its limit as $\ell\searrow0$.\\ 
\noindent\\
We still work in the  orthonormal  frame of one-forms  $\{\sigma_1,\sigma_2\}$ from \textsection \ref{subsect:operatorP} and let 
$\{\sigma_1^2-\sigma_2^2,\sigma_1\otimes\sigma_2+\sigma_2\otimes\sigma_1\}$ be the induced frame of the bundle 
$S_0^2(\calC,T^{\ast}\calC)$ of symmetric traceless two-tensors. With respect to these frames the divergence operator 
$\delta^{g_\ell}\colon S_0^2(\calC,T^{\ast}\calC)\to\Omega^1(\calC)$ takes the form
\begin{equation*}
\delta^{g_\ell}=\begin{pmatrix}\nabla_1+\frac{2\tau}{\sqrt{\tau^2+\ell^2}}&\nabla_2\\
-\nabla_2&\nabla_1+\frac{2\tau}{\sqrt{\tau^2+\ell^2}}\end{pmatrix}.
\end{equation*}
With the Fourier decompositions $\Phi=\sum_{k\in\Z}\varphi_ke^{ik\theta}$,  $\Psi=\sum_{k\in\Z}\psi_ke^{ik\theta}$ we obtain that $\delta^{g_\ell}=\oplus_{k\in\Z}\delta_{\ell,k}$. The  same change of basis $\rho_1 = \sigma_1 - i \sigma_2$, $\rho_2 = \sigma_1 + i \sigma_2$ as before 
diagonalizes each $\delta_{\ell,k}$,  i.e.
\begin{equation*}
\delta_{\ell,k}=\begin{pmatrix}\sqrt{\tau^2+\ell^2}\frac{d}{d\tau}+\frac{2\tau-k}{\sqrt{\tau^2+\ell^2}}&0\\0&\sqrt{\tau^2+\ell^2}\frac{d}{d\tau}+\frac{2\tau+k}{\sqrt{\tau^2+\ell^2}}\end{pmatrix}.
\end{equation*}
Solutions of the homogeneous equation $\delta_{\ell,k}(\lambda_k,\mu_k)^T=0$ are given by
\begin{equation}\label{eq:transfreebasis}
(\lambda_k,\mu_k)=\left(\frac{e^{\frac{k}{\ell}\arctan(\frac{\tau}{\ell})}}{\tau^2+\ell^2},\frac{e^{-\frac{k}{\ell}\arctan(\frac{\tau}{\ell})}}{\tau^2+\ell^2}\right).
\end{equation}

For the following it is convenient to transform    back to the original basis  and to introduce a normalization.

\begin{prop}\label{prop:tttensors}
Let $\ell>0$. A basis of the Hilbert space  $L^2\mathcal S_{\operatorname{tt}}^{g_{\ell}}$ of square-integrable transverse-traceless 
tensors is given by 
\begin{equation}\label{eq:tttensors}
\kappa_{\ell,0}= \frac{\ell^{\frac32} }{(\arctan\frac{1}{\ell})^{\frac12}} \left( \frac{1}{\ell^2+\tau^2},0\right),\quad 
\nu_{\ell,0}=\frac{\ell^{\frac32}}{(\arctan\frac{1}{\ell})^{\frac12} } \left(0,\frac{1}{\ell^2+\tau^2}\right),
\end{equation}
and 
\begin{align*}
&\kappa_{\ell,k}=\frac{C_{\ell,k}}{\tau^2+\ell^2}\left(\cos(k\theta)\cosh\Big(\frac{k}{\ell}\arctan\Big(\frac{\tau}{\ell}\Big)\Big),-\sin(k\theta)\sinh\Big(\frac{k}{\ell}\arctan\Big(\frac{\tau}{\ell}\Big)\Big)\right),\\
&\nu_{\ell,k}=\frac{C_{\ell,k}}{ \tau^2+\ell^2}\left(\sin(k\theta)\cosh\Big(\frac{k}{\ell}\arctan\Big(\frac{\tau}{\ell}\Big)\Big),\cos(k\theta)\sinh\Big(\frac{k}{\ell}\arctan\Big(\frac{\tau}{\ell}\Big)\Big)\right),
\end{align*}
where $k\geq1$ and 
\begin{equation*}
C_{\ell,k}\coloneqq \sqrt{k}e^{-\frac{k}{\ell}\arctan(\frac{1}{\ell})}. 
\end{equation*}
These constants are chosen to normalize the tensors $\kappa_{\ell,k}$ and $\nu_{\ell,k}$ in the sense that
\begin{equation}\label{eq:tensorsnormalized}
c_0\leq\|\kappa_{\ell,k}\|_{L^2(\calC,g_{\ell})},\|\nu_{\ell,k}\|_{L^2(\calC,g_{\ell})}\leq c_1
\end{equation}
where the constants $c_0, c_1 > 0$ are independent of $k$ and $\ell$.
\end{prop}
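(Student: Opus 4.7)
My plan is to treat the three assertions of the proposition (transverse-traceless, spanning, and uniform normalization) in turn, using the Fourier diagonalization already set up in \textsection \ref{sect:symmtttensors}.

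\textbf{Step 1 (Traceless, transverse, real tensors).} The tensors are manifestly traceless because they are written in the frame $\{\sigma_1^2-\sigma_2^2,\,2\sigma_1\otimes_s\sigma_2\}$ for $S_0^2(\calC,T^*\calC)$. For transversality, I use that after the change of basis $\rho_1=\sigma_1-i\sigma_2$, $\rho_2=\sigma_1+i\sigma_2$ and Fourier decomposition in $\theta$, the operator $\delta^{g_\ell}$ splits into the scalar diagonal blocks $\delta_{\ell,k}$ whose kernels are spanned by the pair $(\lambda_k,\mu_k)$ in \eqref{eq:transfreebasis}. For $k=0$ the two independent solutions already coincide with $\kappa_{\ell,0}$ and $\nu_{\ell,0}$ after undoing the $\rho\leftrightarrow\sigma$ change of basis. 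For $k\geq 1$ I combine $\lambda_k e^{ik\theta}$ and $\mu_{-k}e^{-ik\theta}$ (resp.\ $\lambda_k e^{ik\theta}$ and $-\mu_{-k}e^{-ik\theta}$), which after returning to the real frame yields the linear combinations of $\cosh(k\arctan(\tau/\ell)/\ell)\cos(k\theta)$ and $\sinh(k\arctan(\tau/\ell)/\ell)\sin(k\theta)$ displayed in the statement; the second tensor $\nu_{\ell,k}$ is obtained by a $\theta$-rotation.

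\textbf{Step 2 (Completeness).} Since any $h\in L^2\Stt^{g_\ell}$ decomposes as a Fourier series in $\theta$ and the operator $\delta^{g_\ell}$ commutes with this decomposition, $h$ lies in the direct sum over $k\in\ZZ$ of $\ker\delta_{\ell,k}\cap L^2$. The two $\RR$-independent real combinations described in Step~1 exhaust this kernel in each mode, so the family $\{\kappa_{\ell,k},\nu_{\ell,k}\}_{k\geq 0}$ is a basis. (The $k=0$ mode gives two elements, the modes $\pm k$ together give four real solutions which pair into the two tensors listed for $k\geq 1$.)

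\textbf{Step 3 ($L^2$-normalization).} The area form is $dA_{g_\ell}=d\tau\wedge d\theta$ since $\det g_\ell\equiv 1$. For $k=0$, the norm reduces to a multiple of
\[
\int_{-1}^{1}\frac{d\tau}{(\tau^2+\ell^2)^2}=\frac{1}{\ell^2(1+\ell^2)}+\frac{\arctan(1/\ell)}{\ell^3},
\]
and the prefactor $\ell^3/\arctan(1/\ell)$ is chosen precisely so that the result stays between two positive constants as $\ell\searrow 0$. For $k\geq 1$ the $\theta$-integration gives $\pi$ times $\cosh^2+\sinh^2=\cosh(2k\arctan(\tau/\ell)/\ell)$, and the substitution $v=\arctan(\tau/\ell)$ turns the $\tau$-integral into
\[
\frac{1}{\ell^{3}}\int_{-A}^{A}\cosh(2kv/\ell)\cos^{2}(v)\,dv,\qquad A=\arctan(1/\ell).
\]
Since $\cos^2(v)$ vanishes to order one at $v=\pm A$ (where $\cos(A)=\ell/\sqrt{1+\ell^2}$), a Laplace-type asymptotic at the two endpoints gives the integral as $\frac{\ell^3}{2k(1+\ell^2)}e^{2kA/\ell}$ to leading order, uniformly in $k\geq 1$ and $\ell\in(0,\ell_0)$. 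Multiplying by $C_{\ell,k}^2=k\,e^{-2kA/\ell}$ yields $\pi/(2(1+\ell^2))$ plus lower order, which is trapped between positive constants independent of $k$ and $\ell$.

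\textbf{Main obstacle.} The subtle part is the uniform-in-$(k,\ell)$ Laplace analysis of Step~3 near the endpoints $v=\pm A$: the integrand is an exponentially large $\cosh$ multiplied by a $\cos^2$ that vanishes at the endpoints, with both the rate of exponential growth ($2k/\ell$) and the location of the endpoint ($A\to\pi/2$) depending singularly on $\ell$. The key observation is that the natural length scale $\ell/k$ at the endpoint exactly matches the scale on which $\cos^2(v)\sim\ell^2$, producing the cancellation that forces the product $C_{\ell,k}^2\cdot(\text{integral})$ to be $O(1)$ from both above and below. Everything else is a routine manipulation of the explicit formulas.
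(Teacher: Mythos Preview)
Your Steps 1 and 2 are essentially what the paper does: it states that the displayed tensors arise from the solutions \eqref{eq:transfreebasis} via the change of basis back to the real frame, so they lie in $\Stt^{g_\ell}$ and span it mode by mode.

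Step 3 is where you diverge. You pass to the variable $v=\arctan(\tau/\ell)$ and appeal to a Laplace/Watson endpoint analysis of $\int_{-A}^A\cosh(2kv/\ell)\cos^2 v\,dv$. This is correct in principle, but note that the ``lower order'' correction is only small in the parameter $k/\ell$; for fixed $k$ the Watson expansion contributes an $O(1/k)$ term $(1+1/k+1/(2k^2))$, so the limiting value is not simply $\pi/(2(1+\ell^2))$ but varies with $k$ (still bounded above and below). Making the two-sided bound rigorous therefore requires controlling these error terms uniformly in both $k$ and $\ell$.

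The paper avoids all of this by observing that the integral has an elementary antiderivative. After the substitution $T=\tau/\ell$ it evaluates exactly
\[
\left\|\frac{\kappa_{\ell,k}}{C_{\ell,k}}\right\|_{L^2}^2=\frac{\pi}{2k(k^2+\ell^2)(1+\ell^2)}\Big(2k\cosh\tfrac{2k}{\ell}\arctan\tfrac{1}{\ell}+(2k^2+1+\ell^2)\sinh\tfrac{2k}{\ell}\arctan\tfrac{1}{\ell}\Big),
\]
(this follows from $\cos^2 v=\tfrac12(1+\cos 2v)$ and the standard closed form for $\int\cosh(bv)\cos(2v)\,dv$). From this explicit expression the asymptotic $\sim C\,k^{-1}e^{(2k/\ell)\arctan(1/\ell)}$, and hence the uniform two-sided bound after multiplying by $C_{\ell,k}^2$, is immediate without any Laplace argument. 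Your approach works but is unnecessarily delicate; the paper's exact computation is the cleaner route.
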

\begin{proof}
The tensors $\kappa_{\ell,k}$ and $\nu_{\ell,k}$ are obtained from those in \eqref{eq:transfreebasis} by applying the linear transformation $T$. Hence they are contained in $\mathcal S_{\operatorname{tt}}^{g_{\ell}}$ and form a basis of the Hilbert space $L^2$. To verify inequality \eqref{eq:tensorsnormalized} for $k\geq1$ we compute 
\begin{eqnarray}\label{eq:L2kappa}
\begin{split}
  \left\|\frac{\kappa_{\ell,k}}{C_{\ell,k}}\right\|_{L^2(\calC,g_{\ell})}^2&=&\int_{-1}^1\int_0^{2\pi}\frac{1}{(\tau^2+\ell^2)^2}\Big(\cos^2(k\theta)\cosh^2\big(\frac{k}{\ell}\arctan\big(\frac{\tau}{\ell}\big)\big)\\
&&+\sin^2(k\theta)\sinh^2\big(\frac{k}{\ell}\arctan\big(\frac{\tau}{\ell}\big)\big)\Big)\,d\theta\,d\tau\\
 &=&2\pi\int_0^1\frac{\cosh(\frac{2k}{\ell}\arctan(\frac{\tau}{\ell})) }{(\tau^2+\ell^2)^2}\,d\tau\\
 &=&\frac{2\pi}{\ell^3}\int_0^{\frac{1}{\ell}}\frac{\cosh(\frac{2k}{\ell}\arctan(T)) }{(T^2+1)^2}\,dT\\
&=& \frac{\pi}{2k(k^2+\ell^2)(1+\ell^2 )}\Big(2k\cosh(\frac{2k}{\ell}\arctan(\frac{1}{\ell}))\\
&&+(2k^2+1+\ell^2)\sinh(\frac{2k}{\ell}\arctan(\frac{1}{\ell}))\Big).
\end{split}
\end{eqnarray}
In the limit $\ell\searrow0$, the last expression behaves asymptotically as
\begin{equation*}
C k^{-1} e^{\frac{2k}{\ell}\arctan(\frac{1}{\ell})},
\end{equation*}
and thus after multiplying by $C_{\ell,k}^2$ is uniformly bounded above and below. The corresponding statements for 
$\nu_{\ell,k}$ and in the case $k=0$ follow similarly. 
\end{proof}

We now consider the limiting behavior of the tensors $\kappa_{\ell,k}$ and $\nu_{\ell,k}$ as $\ell\searrow0$. 
We again discuss the cases $k=0$ and $k\geq1$ separately.

\subsubsection*{Case $k=0$.}
Looking at the explicit formul\ae\ in Proposition \ref{prop:tttensors}, we see that both
$\kappa_{\ell,0}(\tau,\theta)$ and $\nu_{\ell,0}(\tau,\theta)$ concentrate along $\tau = 0$ as $\ell \searrow 0$. 
To understand this behavior, we lift these tensors to $\wh{C}$. Thus we set $T=\frac{\tau}{\ell}$, and calculate that
\begin{align}\label{eq:ttzeromode}
\begin{split}
\ell^{\frac{1}{2}}\kappa_{\ell,0}=  \frac{dT^2}{\arctan(\frac{1}{\ell})^{\frac{1}{2}}(1+T^2)^2}-\frac{\ell^2}{\arctan(\frac{1}{\ell})^{\frac{1}{2}}}\,
d\theta^2,\\ 
\ell^{\frac{1}{2}}\nu_{\ell,0}=\frac{\ell}{\arctan(\frac{1}{\ell})^{\frac{1}{2}}(1+T^2)}(dT\otimes d\theta+d\theta\otimes dT).
\end{split}
\end{align}
Both  families of tensors here are normalized in $L^2$ with respect to the area form $\ell\,dT\wedge d\theta$; they converge 
uniformly as $\ell \searrow 0$ to $(\frac{2}{\pi})^{\frac{1}{2}}\frac{dT^2}{(1+T^2)^2}$ and to $0$, respectively.

\subsubsection*{Case $k\geq1$.}
By contrast, the  tensors $\kappa_{\ell,k} $ and $ \nu_{\ell,k}$, $k \geq 1$, in Proposition \ref{prop:tttensors} converge 
uniformly on $\calC$.  We study this now.

First consider $\kappa_{\ell,k}$; we restrict to $\tau>0$ since $\tau<0$ is similar. By definition
\[
\kappa_{\ell,k}(\tau,\theta)=\frac{\sqrt{k}}{\tau^2+\ell^2}\Big(\cos(k\theta)\frac{\cosh(\frac{k}{\ell}\arctan(\frac{\tau}{\ell}))}{e^{\frac{k}{\ell}\arctan(\frac{1}{\ell})}},-\sin(k\theta)\frac{\sinh(\frac{k}{\ell}\arctan(\frac{\tau}{\ell}))}{e^{\frac{k}{\ell}\arctan(\frac{1}{\ell})}}\Big).
\]
Since 
\begin{equation*}
\lim_{\ell\to0}\frac{\arctan(\frac{\tau}{\ell})-\arctan(\frac{1}{\ell})}{\ell}=1-\frac{1}{\tau},
\end{equation*}
it follows that
\begin{eqnarray*}
\lim_{\ell\to0}\frac{\cosh(\frac{k}{\ell}\arctan(\frac{\tau}{\ell}))}{e^{\frac{k}{\ell}\arctan(\frac{1}{\ell})}}&=&\frac{1}{2}\lim_{\ell\to0}\frac{e^{\frac{k}{\ell}\arctan(\frac{\tau}{\ell})}}{e^{\frac{k}{\ell}\arctan(\frac{1}{\ell})}}+\frac{1}{2}\lim_{\ell\to0} \frac{e^{-\frac{k}{\ell}\arctan(\frac{\tau}{\ell})}}{e^{\frac{k}{\ell}\arctan(\frac{1}{\ell})}}\\
&=&\frac{1}{2}e^{(1-\frac{1}{\tau})k},
\end{eqnarray*}
and similarly, 
\begin{equation*}
\lim_{\ell\to0}\frac{\sinh(\frac{k}{\ell}\arctan(\frac{\tau}{\ell}))}{e^{\frac{k}{\ell}\arctan(\frac{1}{\ell})}}=\frac{1}{2}e^{(1-\frac{1}{\tau})k}.
\end{equation*}

We obtain, finally, that for $|\tau|\leq1$, 
\begin{equation}\label{eq:decaytt}
\kappa_{\ell,k}(\tau,\theta) \to \kappa_{0,k}(\tau,\theta)=\frac{\sqrt{k}}{2\tau^2}e^{(1-\frac{1}{|\tau|})k}(\cos(k\theta),
-\operatorname{sgn}(\tau)\sin(k\theta)),
\end{equation}
and similarly, 
\begin{equation}\label{eq:decaytt1}
\nu_{\ell,k}(\tau,\theta)\to \nu_{0,k}(\tau,\theta)=\frac{\sqrt{k}}{2\tau^2}e^{(1-\frac{1}{|\tau|})k}(\sin(k\theta),\operatorname{sgn}(\tau)\cos(k\theta)).
\end{equation} 
Furthermore, the $L^2$ norms of these limits, with respect to the limiting metric $g_0$, are uniformly bounded in $k$.
It can be verified by straightforward calculation that if $k\neq0$, then $\kappa_{0,k}$ and $\nu_{0,k}$ belong to 
$\mathcal S_{\mathrm{tt}}(g_0)$. 

We point out, finally, that the equation $\delta^{g_0}\mu_k=0$ admits the further family of solutions
\begin{align*}
\mu_k(\tau,\theta)=\frac{1}{\tau^2}e^{\frac{k}{|\tau|}}(\cos(k\theta),\operatorname{sgn}(\tau)\sin(k\theta)),\\
\lambda_k(\tau,\theta)=\frac{1}{\tau^2}e^{\frac{k}{|\tau|}}(-\sin(k\theta),\operatorname{sgn}(\tau)\cos(k\theta))\qquad(k\in\mathbb N).
\end{align*}
However, these blow up exponentially at $\tau=0$, and do not enter our considerations further.  

\begin{rem}\upshape
The $\mathrm{tt}$ tensors $\kappa_{0,k}$ and $\nu_{0,k}$ admit a geometric interpretation when $k=0$ and $1$. 
The case $k=0$ corresponds to an infinitesimal change of length and  Dehn twist coordinates, while for $k=1$ these 
tensors represent infinitesimal translations of the node $\{p\}$ along the punctured surface $\Sigma\setminus\{p\}$. 
Passing to local holomorphic coordinates and identifying transverse-traceless tensors with meromorphic quadratic 
differentials, it is not hard to see that $k=0$ corresponds to meromorphic quadratic differentials with poles of order 
$2$, while the case $k=1$ corresponds to those with poles of order $1$.
\end{rem}



\section{Parametrix construction}\label{sect:parametrix}
We now construct a parametrix for the operator $P^{g_{\ell}}$ by gluing together two local parametrices $G_{\ell,0}$ and $G_{\ell,1}$, 
the first defined on some long cylinder $(\mathcal C,g_{\ell})$ and the second on the `thick' part $(\Sigma\setminus 
\mathcal C,g_{\ell})$ of the Riemann surface $(\Sigma,g_{\ell})$. To simplify notation, we carry this out in the case of a single divisor, 
i.e.~when $|J|=1$ in the notation introduced in \textsection \ref{univbundle}. The general case is proved in exactly the same way. 

Using the local coordinates $(\tau,\theta)$ as in \textsection \ref{univbundle}, set $\Sigma_0\coloneqq\Sigma\setminus
\big([-\frac{1}{2},\frac{1}{2}]\times S^1\big)$ and $\mathcal C\coloneqq(-\frac{3}{4}, \frac{3}{4})\times S^1$; we sometimes 
refer to this last set as $\Sigma_1$. Thus $\{\Sigma_0, \Sigma_1\}$ is an open cover of $\Sigma$. Choose a partition of unity 
$\{\chi_0,\chi_1\}$ subordinate to this cover; also, choose functions $\tilde\chi_j\in \calC^{\infty}(\Sigma_j)$, such that 
$\tilde\chi_j\chi_j= \chi_j$. Let $G_{\ell,0}$ and $G_{\ell,1}$ denote the exact inverses of the operator $P^{g_{\ell}}$, say with
Dirichlet conditions at the boundaries, acting on $1$-forms on $\Sigma_0$ and $\Sigma_1$. 
Now define the parametrix
\begin{equation}\label{eq:firstparametrix}
\tilde G_{\ell}= \tilde\chi_0G_{\ell,0}\chi_0+\tilde\chi_1G_{\ell,1}\chi_1.
\end{equation}
It is immediate that $P^{g_{\ell}}\tilde G_{\ell}=\operatorname{Id}+\sum_{j=0,1}[P^{g_{\ell}},\tilde\chi_j]G_{\ell,j}\chi_j$. 

The error term 
\begin{equation*}
R_{\ell}\coloneqq-\sum_{j=0,1}[P^{g_{\ell}},\tilde\chi_j]G_{\ell,j}\chi_j
\end{equation*}
is smoothing. 
Indeed, the supports of $[P^{g_{\ell}},\tilde\chi_j]$ and $\chi_j$ are disjoint and $G_{\ell,j}$ is a pseudodifferential operator, so its Schwartz
kernel is singular only along the diagonal, so the Schwartz kernel of $R_\ell$ is $\calC^\infty$ and has support disjoint from the
diagonal. 

We adjust the parametrix $\tilde G_{\ell}$ slightly to make $R_{\ell}$ vanishes at $\ell=0$. To do this, let
$F_0(z, z') \in \calC^{\infty}(\Sigma\times\Sigma)$ be the solution of
\begin{equation*}
P^{g_0}F_0(z, z') =R_0 (z, z'),
\end{equation*}
where $z'$ is regarded as a parameter. If we restrict $z$ to lie in $|\tau| \leq 1$, then $F_0$ decomposes as $ F_0^0 + F_0^\perp$, 
where the first term is the zero Fourier mode (in $z$) and the other is the sum of all the other Fourier modes. 
Our explicit calculations above show that $F_0^\perp$ vanishes to all orders at $\tau = 0$, while $F_0^0$ is polyhomogeneous there.
Now extend $F_0$ to a polyhomogeneous family $F_{\ell}$ on $[ \Sigma \times \Sigma; \{\tau = \ell=0\}]$. Note that we can assume
that this vanishes for $|\tau' | \leq c$ and for all $\ell$ since $R_0$ vanishes in this region. 

Next define
\begin{equation*}
\bar G_{\ell}\coloneqq\tilde G_{\ell}+F_{\ell}\qquad\textrm{and}\qquad S_{\ell}\coloneqq R_{\ell}-P^{g_{\ell}}F_{\ell}.
\end{equation*}
By construction, $S_{\ell}$ vanishes along the face $\ell=0$, is polyhomogeneous, and its Schwartz kernel $\mathcal S_{\ell}(z,z')$ 
has support in $(\Sigma\setminus \{|\tau| \leq c\})\times\Sigma$.  Observe finally that the family of operators $S_{\ell}$ is uniformly 
bounded on $L^2(\Sigma,dA_{g_{\ell}})$ and converges to $0$ as $\ell \searrow 0$ in the operator norm topology. Since
\begin{equation*}
P^{g_{\ell}}\bar G_{\ell}=P^{g_{\ell}}\tilde G_{\ell}+P^{g_{\ell}}F_{\ell}=\operatorname{id}+
\sum_{j=0,1}[P^{g_{\ell}},\tilde\chi_j]G_{\ell,j}\chi_j+P^{g_{\ell}}F_{\ell}=\operatorname{id}-S_{\ell},
\end{equation*}
we see that for $\ell$ sufficiently small, the exact inverse of $P^{g_{\ell}}$ is given by 
\begin{equation}\label{eq:exactinverse}
G^{g_{\ell}}=\bar G_{\ell}(\operatorname{Id}-S_{\ell})^{-1}\colon L^2(\Sigma,dA_{g_{\ell}})\to L^2(\Sigma,dA_{g_{\ell}}).
\end{equation}
where 
\begin{equation*}
(\operatorname{Id}-S_{\ell})^{-1}=\sum_{k=0}^{\infty}S_{\ell}^k
\end{equation*}
is the norm-convergent Neumann series. 

\begin{lemma}\label{lem:exactinverse}
Suppose that $h_{\ell} $ is polyhomogeneous on $\Sigma \times [0,\ell_0)$ and vanishes for $|\tau|\leq c<1$. Then the 
unique solution to $P^{g_{\ell}}\omega_{\ell}=h_{\ell}$ is polyhomogeneous on $[ \Sigma \times [0,\ell_0); \{\tau = \ell=0\}]$
\end{lemma}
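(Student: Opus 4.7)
The idea is to read off polyhomogeneity directly from the parametrix formula \eqref{eq:exactinverse}: $\omega_\ell = G^{g_\ell} h_\ell = \bar G_\ell (\operatorname{Id} - S_\ell)^{-1} h_\ell$. The argument splits into two independent steps, each of which feeds into the other.

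\emph{Step 1: The Neumann series is polyhomogeneous.} Set $\tilde h_\ell := (\operatorname{Id} - S_\ell)^{-1} h_\ell = \sum_{k\geq 0} S_\ell^k h_\ell$, which converges in $L^2(\Sigma, dA_{g_\ell})$ for $\ell$ small by the operator norm estimate noted above. I claim $\tilde h_\ell$ is polyhomogeneous on $\Sigma \times [0,\ell_0)$ and still vanishes for $|\tau| \leq c$. The vanishing is immediate from the support condition on $\mathcal S_\ell(z,z')$: since this kernel is supported in $(\Sigma \setminus \{|\tau| \leq c\}) \times \Sigma$, the section $S_\ell \phi$ vanishes for $|\tau| \leq c$ no matter what $\phi$ is, and $h_\ell$ itself vanishes there by hypothesis. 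Polyhomogeneity comes from the fact that $S_\ell$ is smoothing and polyhomogeneous and, crucially, vanishes as $\ell \searrow 0$ (this was the whole point of modifying $\tilde G_\ell$ by $F_\ell$). Consequently $S_\ell^k h_\ell$ vanishes to an order at $\ell = 0$ which increases with $k$, so the formal series $\sum S_\ell^k h_\ell$ is a well-defined asymptotic series at $\ell = 0$ with polyhomogeneous coefficients. By an asymptotic summation (Borel) argument one produces a polyhomogeneous function with this expansion, and uniqueness of the $L^2$ solution to $(\operatorname{Id}-S_\ell)\tilde h_\ell = h_\ell$ forces it to agree with the Neumann sum.

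\emph{Step 2: Applying $\bar G_\ell$ preserves polyhomogeneity on the blowup.} Now we have $\omega_\ell = \tilde\chi_0 G_{\ell,0}(\chi_0 \tilde h_\ell) + \tilde\chi_1 G_{\ell,1}(\chi_1 \tilde h_\ell) + F_\ell \tilde h_\ell$. The first piece is supported in the thick part $\Sigma_0$, where the metric $g_\ell$ and hence the exact inverse $G_{\ell,0}$ depend smoothly (in fact polyhomogeneously) on $\ell$ as a family of classical pseudodifferential operators; since $\chi_0 \tilde h_\ell$ is polyhomogeneous in $\ell$ with smooth coefficients in $\Sigma_0$ (no blowup is needed, as $|\tau| \geq 1/2$ on $\Sigma_0$), the result is polyhomogeneous in $\ell$. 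The term $F_\ell \tilde h_\ell$ is the pairing of a polyhomogeneous Schwartz kernel against a polyhomogeneous function and is therefore polyhomogeneous on $[\Sigma \times [0,\ell_0); \{\tau = \ell = 0\}]$. The nontrivial piece is $G_{\ell,1}(\chi_1 \tilde h_\ell)$, but $\chi_1 \tilde h_\ell$ is polyhomogeneous and vanishes for $|\tau| \leq c$, so Propositions \ref{prop:inversenonzeromodes} and \ref{eq:inversecylinder}, applied mode by mode in the Fourier decomposition on the cylinder, yield that $G_{\ell,1}(\chi_1 \tilde h_\ell)$ lifts to a polyhomogeneous function on $\wh\calC$, i.e.\ on the blowup of $\calC \times [0,\ell_0)$ at $\{\tau = \ell = 0\}$. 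Reassembling, $\omega_\ell$ is polyhomogeneous on $[\Sigma \times [0,\ell_0); \{\tau = \ell = 0\}]$, and uniqueness follows since $P^{g_\ell}$ is $L^2$-invertible for all small $\ell$.

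The main obstacle is genuinely Step 1: turning $L^2$-convergence of the Neumann series into polyhomogeneity of the sum. The rescue is that $S_\ell$ vanishes at $\ell=0$, so successive iterates gain order at the boundary and the series is asymptotic in the polyhomogeneous sense. Step 2 is then essentially bookkeeping: smooth parameter dependence on the thick part, together with the explicit cylinder analysis of \textsection\ref{subsect:operatorP}, packages each piece into a polyhomogeneous function on the correct resolved space.
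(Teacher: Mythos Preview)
Your proposal is correct and follows the same route as the paper: write $\omega_\ell = \bar G_\ell\, (\operatorname{Id}-S_\ell)^{-1} h_\ell$, observe that the support condition on the Schwartz kernel of $S_\ell$ forces $(\operatorname{Id}-S_\ell)^{-1} h_\ell$ to vanish for $|\tau|\leq c$, and then feed this into the cylinder analysis (Propositions~\ref{prop:inversenonzeromodes} and~\ref{eq:inversecylinder}) together with the smooth $\ell$-dependence on the thick part. Your write-up is in fact more careful than the paper's own argument, which is very terse: you make explicit (via the Borel-type asymptotic summation in Step~1) why the Neumann series yields a polyhomogeneous $\tilde h_\ell$, and you decompose $\bar G_\ell$ into its three constituents in Step~2, whereas the paper simply invokes Proposition~\ref{eq:inversecylinder} on $\bar G_\ell k_\ell$ without spelling out these intermediate points.
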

\begin{proof} 
The solution $\omega_\ell$ equals $G^{g_\ell} h_\ell = \bar G_\ell k_\ell$ where $k_\ell = (\mbox{Id} - S_\ell)^{-1} h_\ell$, or equivalently,
$k_\ell = h_\ell + S_\ell k_\ell$.  Notice that both terms on the right vanish for $|\tau| \leq c$, the first term by hypothesis
and the second because $S_\ell(z,z')$ vanishes in this region. Therefore $k_\ell$ itself vanishes near $\tau = 0$.
Therefore, by Proposition~\ref{eq:inversecylinder}, $\omega_\ell = \bar G_\ell k_\ell$ is polyhomogeneous, as claimed.
\end{proof}

\section{Proof of the main result}\label{sect:ttsubbundle}
Following the notation of \textsection \ref{univbundle}, fix $q_0 \in H_j$ and let $\calV \subset \Mgt$ be  a neighborhood 
of $q_0$. 
Recall also from Proposition \ref{prop:verticaltwotensor} the existence of a polyhomogeneous slice, i.e., family of 
symmetric $2$-tensors $h_q$ on the vertical tangent bundle $T^{\ver} \Mgonet$, which restricts to a family of 
approximately hyperbolic metrics. 

In this final section, we complete the main step of our main result. This is done by constructing a local frame for 
the subbundle $\mathcal S_{\operatorname{tt}}(h_q)\subseteq\operatorname{Sym}^2(T^{\ver} \Mgonet)$ of 
transverse-traceless two-tensors which depends in a polyhomogeneous way on the Fenchel-Nielsen coordinates on $\calV$. 
We first construct 
a polyhomogeneous $(6\gamma-6)$-frame whose elements are approximately transverse-traceless, and then 
correct these sections to be exactly transverse-traceless, preserving polyhomogeneity in the process.   This construction 
relies crucially on the polyhomogeneity of solutions $\omega_{\ell}$ to $P^{g_{\ell}}\omega_{\ell} =h_{\ell}$ when $h_{\ell}$ 
has support disjoint from the set of degenerating central geodesics, cf.~Lemma \ref{lem:exactinverse}.  The fact
that these sections remain independent is because the correction terms are uniformly small.

\begin{lemma}\label{lem:basicprojestimate}
Let $g$ be a smooth metric on $\Sigma$ and $G$ the unique hyperbolic metric conformal to $g$, i.e., $G=e^{2u}g$ for 
some $u\in \calC^{\infty}(\Sigma)$.  Then there exists a constant $C>0$, which only depends on $\|u\|_{\calC^0}$, such that 
\begin{equation*}
\|T^g\kappa-\kappa\|_{L^2(\Sigma, dA_g)}\leq C \|\delta^g\kappa\|_{L^2(\Sigma, dA_g)}
\end{equation*}
for all $\kappa\in S_0^2(\Sigma, T^*\Sigma)$. 
\end{lemma}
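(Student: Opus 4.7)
The plan is to reduce the estimate to the case of the hyperbolic metric $G$ via two conformal-change identities specific to dimension two, and to extract the constant $C$ purely from $\|u\|_{\calC^0}$. Setting $\omega := G^g \delta^g \kappa$, the formula \eqref{defT} for $T^g$ together with $B^g \kappa = \delta^g \kappa$ (valid because $\tr^g \kappa = 0$) gives $T^g \kappa - \kappa = -\mathcal D^g \omega$, while $\omega$ satisfies $P^g \omega = \delta^g \kappa$. Since $(\mathcal D^g)^* = \delta^g$ on trace-free tensors and $P^g = \delta^g \mathcal D^g$, this reduces the whole estimate to controlling $\|\mathcal D^g \omega\|_{L^2(g)}$ in terms of $\|\delta^g \kappa\|_{L^2(g)}$.

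For the hyperbolic metric $G$ the bound is immediate: the Weitzenb\"ock identity \eqref{eq:Weitzenboeck} with $K^G \equiv -1$ gives $P^G = \tfrac12 \Delta^G + 1$, whose spectrum on $1$-forms lies in $[1, \infty)$. Writing $\omega_G := G^G \delta^G \kappa$, Cauchy-Schwarz combined with this spectral gap yields
\[
\|T^G \kappa - \kappa\|_{L^2(G)} = \|\mathcal D^G \omega_G\|_{L^2(G)} \leq \|\delta^G \kappa\|_{L^2(G)}.
\]

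To propagate this back to $g$ I use two elementary identities valid in $\dim \Sigma = 2$. First, for any trace-free symmetric $2$-tensor $\mu$,
\[
|\mu|^2_G \, dA_G = e^{-2u} |\mu|^2_g \, dA_g,
\]
so the $L^2(g)$ and $L^2(G)$ norms of $\mu$ are comparable with constants $e^{\pm \|u\|_{\calC^0}}$. Second, and more crucially,
\[
\delta^G \kappa = e^{-2u} \delta^g \kappa \qquad \text{for all } \kappa \in S_0^2(\Sigma, T^*\Sigma);
\]
this follows from the Christoffel correction $(\Gamma^G - \Gamma^g)^k_{ij} = \delta^k_i u_j + \delta^k_j u_i - g_{ij} u^k$ by direct contraction with $G^{ik} = e^{-2u} g^{ik}$, in which all $du$-correction terms cancel owing to $g^{ik} g_{ik} = 2$ and $\tr^g \kappa = 0$. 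Consequently $\|\delta^G \kappa\|_{L^2(G)} \leq e^{2\|u\|_{\calC^0}} \|\delta^g \kappa\|_{L^2(g)}$.

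Finally, $\Stt(g) = \Stt(G)$ by Proposition~\ref{prop:conformalinv}, so $T^G \kappa \in \Stt$; the variational characterization of $T^g$ as the $L^2(g)$-orthogonal projection onto $\Stt$ gives $\|T^g \kappa - \kappa\|_{L^2(g)} \leq \|T^G \kappa - \kappa\|_{L^2(g)}$. Combining this with the pointwise comparison of norms and the hyperbolic estimate yields
\[
\|T^g \kappa - \kappa\|_{L^2(g)} \leq e^{\|u\|_{\calC^0}} \|T^G \kappa - \kappa\|_{L^2(G)} \leq e^{\|u\|_{\calC^0}} \|\delta^G \kappa\|_{L^2(G)} \leq e^{3 \|u\|_{\calC^0}} \|\delta^g \kappa\|_{L^2(g)},
\]
and we may take $C = e^{3 \|u\|_{\calC^0}}$. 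The main technical point is the $2$-dimensional identity $\delta^G \kappa = e^{-2u} \delta^g \kappa$: without this exact cancellation the constant would involve $\|u\|_{\calC^1}$ rather than $\|u\|_{\calC^0}$, and the statement as given would fail.
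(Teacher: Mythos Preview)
Your proof is correct and follows essentially the same approach as the paper: first establish the estimate with constant $1$ for the hyperbolic metric $G$ via the spectral lower bound $P^G\ge 1$, then transfer it to $g$ using the variational characterization of the projection together with the two-dimensional conformal identity $\delta^G\kappa=e^{-2u}\delta^g\kappa$. Your use of $\mathcal D^g$ in place of $\pi^g(\delta^g)^*$ is just a notational repackaging of the paper's computation $\|\sigma\|^2=\langle\delta^g\kappa,G^g\delta^g\kappa\rangle$, and your explicit constant $C=e^{3\|u\|_{\calC^0}}$ is a minor sharpening of the paper's unspecified constant.
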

\begin{proof}
We first prove this estimate when $g = G$ is already hyperbolic. Set $\sigma=T^g\kappa-\kappa$. 
Note that $B^g\kappa=\delta^g\kappa$ since $\tr^g \kappa = 0$; furthermore, by \eqref{defT}, we have
$\sigma=-\pi^g(\delta^g)^{\ast}G^g \delta^g\kappa$. The claim in this case then follows from the estimate 
\begin{eqnarray}\label{eq:basicprojestimate}
\begin{split}
\|\sigma\|_{L^2(\Sigma,dA_g)}^2&=&\langle\pi^g(\delta^g)^{\ast}G^g \delta^g\kappa,\pi^g(\delta^g)^{\ast}G^g \delta^g\kappa\rangle_g\\
&=& \langle\pi^g(\delta^g)^{\ast}G^g \delta^g\kappa, (\delta^g)^{\ast}G^g \delta^g\kappa\rangle_g\\
&=&\langle \delta^g\pi^g(\delta^g)^{\ast}G^g \delta^g\kappa, G^g \delta^g\kappa\rangle_g\\
&=&\langle B^g(\delta^g)^{\ast}G^g \delta^g\kappa, G^g\delta^g\kappa\rangle_g\\
&=&\langle P^g G^g \delta^g\kappa, G^g\delta^g\kappa\rangle_g\\
&=&\langle \delta^g\kappa, G^g \delta^g\kappa\rangle_g\\
&\leq& \|\delta^g\kappa\|_{L^2(\Sigma,dA_g)}^2.
\end{split}
\end{eqnarray}
The fourth identity again uses that $B^g=\delta^g$ on trace-free tensors. The last inequality holds because $P^g \geq 1$ (as
a self-adjoint operator on $L^2$), cf.~\eqref{eq:Weitzenboeck}, since $K^g=-1$. This proves the claim in the case where $g$ is hyperbolic. 

Consider now the case of a general metric $g$, where $G = e^{2u} g$.  We have $\sigma=T^g\kappa-\kappa$ as before,
but now write $\sigma_1=T^G\kappa-\kappa$. Since $\mathcal S_{\operatorname{tt}}(g) = \mathcal S_{\operatorname{tt}}(G)$,
and $T^g$ is an orthogonal projector, it follows that
\begin{equation*}
\|\sigma\|_{L^2(\Sigma, dA_g)}\leq \|\sigma_1\|_{L^2(\Sigma, dA_g)}.
\end{equation*}
Using the general identity $\delta^Gh=e^{-2u} \delta^gh$ for traceless tensors $h$, and taking norms with respect to $g$, 
we can  further estimate
\begin{multline*}
\|\sigma\|_{L^2(\Sigma, dA_g)}\leq \|\sigma_1\|_{L^2(\Sigma, dA_g)}\leq C\|\sigma_1\|_{L^2(\Sigma, dA_G)}\\
\leq C\|\delta^G\kappa\|_{L^2(\Sigma, dA_G)}\leq C\|\delta^g\kappa\|_{L^2(\Sigma, dA_g)},
\end{multline*}
where the constant $C>0$ depends only on $\|u\|_{C^0(\Sigma)}$. Here the third inequality holds by \eqref{eq:basicprojestimate}. This proves the claim in the general case.
\end{proof}

We shall apply Lemma \ref{lem:basicprojestimate} to the family $g_{\ell}$ of approximately hyperbolic metrics. To do so,
it is clearly important to show that the constant $C$ appearing in this Lemma is uniform in $\ell$ as $\ell \searrow 0$. 
In other words, we must prove that the conformal factor is uniformly bounded. 
\begin{lemma}\label{lem:boundconffactor}
Let $g_{\ell}$ be the family of approximately hyperbolic metrics and let $G_{\ell}=e^{2u_{\ell}}g_{\ell}$ be the unique hyperbolic 
metric conformally equivalent to $g_{\ell}$. Then there are constants $c>0$ and $\ell_0>0$ such that  
\begin{eqnarray*}
\|u_{\ell}\|_{C^0(\Sigma)}\leq c 
\end{eqnarray*}
for $0\leq\ell\leq\ell_0$.
\end{lemma}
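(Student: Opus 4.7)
The plan is to reduce the claim to a scalar maximum principle argument for a Liouville-type PDE, and then invoke the construction of the polyhomogeneous slice to get uniform bounds on the curvature of $g_\ell$.

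Recall that in dimension two, if $\tilde g = e^{2u}g$ then
\[
K^{\tilde g} = e^{-2u}\bigl(K^g - \Delta^g u\bigr),
\]
where $\Delta^g$ denotes the Laplace--Beltrami operator with the geometer's sign convention (so $\Delta^g u \leq 0$ at interior maxima of $u$). Imposing $K^{G_\ell} \equiv -1$ on $G_\ell = e^{2u_\ell}g_\ell$ therefore reduces the problem to the semilinear elliptic equation
\begin{equation*}
\Delta^{g_\ell} u_\ell = K^{g_\ell} + e^{2u_\ell}
\end{equation*}
on the closed surface $\Sigma$.

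The key geometric input I would use is that the approximately hyperbolic slice has uniformly controlled curvature. By construction, $K^{g_\ell} \equiv -1$ on the cylindrical region $\calC$ where the metric is given by \eqref{mhm}; on the complement $\Sigma \setminus \calC$ the slice $g_\ell$ depends polyhomogeneously on $\ell$ with a smooth limit at $\ell = 0$ which is itself hyperbolic outside a neighborhood of the node, and so its curvature converges uniformly to $-1$ there. Consequently there exist constants $\lambda_2 \geq \lambda_1 > 0$ independent of $\ell \in [0,\ell_0]$ with $\lambda_1 \leq -K^{g_\ell} \leq \lambda_2$ pointwise on $\Sigma$.

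Given this curvature bound, the conclusion follows from the maximum principle applied on the compact surface $\Sigma$. At a maximum point $p_+$ of $u_\ell$ one has $\Delta^{g_\ell} u_\ell(p_+) \leq 0$, whence $e^{2u_\ell(p_+)} \leq -K^{g_\ell}(p_+) \leq \lambda_2$, giving $u_\ell(p_+) \leq \tfrac{1}{2}\log \lambda_2$. The symmetric estimate at a minimum point $p_-$ reads $e^{2u_\ell(p_-)} \geq -K^{g_\ell}(p_-) \geq \lambda_1$, so $u_\ell(p_-) \geq \tfrac{1}{2}\log \lambda_1$. Combining these,
\[
\|u_\ell\|_{\calC^0(\Sigma)} \leq \tfrac{1}{2}\max\bigl(|\log \lambda_1|,\, |\log \lambda_2|\bigr),
\]
which is the desired uniform bound.

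The only nontrivial step is the uniform two-sided bound on $-K^{g_\ell}$. On the cylinder this is automatic. On the complement it is a direct consequence of the polyhomogeneity of the slice, cf.\ Proposition~\ref{prop:verticaltwotensor}, once one has arranged the construction so that the leading part of $g_\ell$ is genuinely hyperbolic on $\Sigma \setminus \calC$; this can always be done, and it is the only place where the specific features of the approximately hyperbolic slice enter the argument.
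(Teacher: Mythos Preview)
Your argument is correct and is essentially the same as the paper's proof: both reduce to a maximum-principle/sub-supersolution argument for the curvature (Liouville) equation, with the key input being a uniform two-sided bound on $-K^{g_\ell}$ coming from the construction of the approximately hyperbolic slice. The only cosmetic differences are that the paper writes the equation with the Laplacian of the target metric $G_\ell$ rather than $g_\ell$, and phrases the conclusion via constant sub- and supersolutions $u\equiv\pm c$ instead of evaluating directly at extremal points.
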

\begin{proof}
By construction, the metric $g_0$ is hyperbolic, and $g_{\ell}\to g_0$ uniformly in $\calC^\infty$ on $\Sigma\setminus A$, where 
$A$ is the annulus $(-\tau_0,\tau_0)\times S^1$, as $\ell\searrow0$. Thus $K_{g_{\ell}}<0$ for $\ell$ small enough.  
With $\Delta_{G_{\ell}}$ the (negative semidefinite) scalar Laplacian, $u_{\ell}$ satisfies 
\begin{eqnarray}\label{eq:conffactor}
\Delta_{G_{\ell}}u_{\ell}+K_{G_{\ell}}-e^{-2u_{\ell}}K_{g_{\ell}}=0.
\end{eqnarray}
Since $K_{G_{\ell}}=-1$ and $K_{g_{\ell}}<0$, there are sub- and supersolutions
\begin{eqnarray*}
u_{\operatorname{sub}}\equiv-c,\qquad u_{\operatorname{sup}}\equiv c,
\end{eqnarray*}
for some $c \gg 0$, and for all $\ell < \ell_0$. This means that $-c\leq u_{\ell}\leq c$ for all such $\ell$, as claimed.
\end{proof}
 
Our discussion now splits naturally into two cases. 
\subsubsection*{Transverse-traceless tensors concentrating at a central geodesic}
For the rest of this section, we fix the following notation. Consider a neighborhood $\calV =[0,\varepsilon)_{\ell}\times S^1\times\calW$
where $\calW$ is a neighborhood in some divisor $D_j$. Let $q\in\calV$. Then we use the notation $g_{\ell}$ for a hyperbolic metric on $\Sigma$ (which is degenerate if $\ell=0$) representing the point $q$. We suppress its dependence on the remaining 
Fenchel-Nielsen coordinates.

Let $\kappa_{\ell,0}$ and $\nu_{\ell,0}\in\mathcal S_{\operatorname{tt}}(g_{\ell})$ be the symmetric transverse-traceless $2$-tensors 
\eqref{eq:tttensors} on  the cylinder $(\calC,g_{\ell})$.  Fix a smooth cutoff function $\chi\colon[-1,1]\to[0,1]$ with 
$\operatorname{supp}(\chi)\subseteq[-\frac{3}{4},\frac{3}{4}]$ and $\chi\equiv1$ for $|\tau| \leq \frac{1}{2}$. Now set 
\begin{equation}\label{eq:muell}
\hat\mu_{\ell}^1\coloneqq\chi\kappa_{\ell,0}\qquad\textrm{and}\qquad\hat\mu_{\ell}^2\coloneqq\chi\nu_{\ell,0},
\end{equation}
which we extend by $0$ to all of $\Sigma$, and then consider their projections 
\begin{equation}\label{eq:muell1}
\mu_{\ell}^1\coloneqq T^{g_{\ell}}\hat\mu_{\ell}^1\qquad\textrm{and}\qquad\mu_{\ell}^2\coloneqq T^{g_{\ell}}\hat\mu_{\ell}^2
\end{equation}
to $\Stt(g_{\ell})$. 

\begin{prop}
The families $\mu_{\ell}^1$, $\mu_\ell^2$ are polyhomogeneous. 
\end{prop}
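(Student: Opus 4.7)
The plan is to unpack the definition $T^{g_\ell} = \pi^{g_\ell} \circ \bigl(\id - (\delta^{g_\ell})^* G^{g_\ell} B^{g_\ell}\bigr)$ from \eqref{defT} and verify polyhomogeneity of each factor in turn when applied to $\hat\mu_\ell^j$. By construction $\hat\mu_\ell^j$ is trace-free, so $\pi^{g_\ell}\hat\mu_\ell^j = \hat\mu_\ell^j$, and it suffices to show that
\[
\hat\mu_\ell^j - \mu_\ell^j = \pi^{g_\ell}(\delta^{g_\ell})^* G^{g_\ell} B^{g_\ell} \hat\mu_\ell^j
\]
is polyhomogeneous on $\wh{\calC}$. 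That the tensors $\hat\mu_\ell^j$ themselves lift polyhomogeneously is immediate from the formul\ae\ \eqref{eq:ttzeromode}, which exhibit $\ell^{1/2}\kappa_{\ell,0}$ and $\ell^{1/2}\nu_{\ell,0}$ as polyhomogeneous in the blown-up coordinates $(T=\tau/\ell,\ell)$; multiplication by the cutoff $\chi(\tau)=\chi(T\ell)$ preserves this. Moreover, since $g_\ell$ is polyhomogeneous by Proposition~\ref{prop:verticaltwotensor}, both the algebraic projector $\pi^{g_\ell}$ and the first-order operator $(\delta^{g_\ell})^*$ act with polyhomogeneous coefficients and so preserve polyhomogeneity. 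Thus the only serious point is the behaviour of $G^{g_\ell}$ applied to $B^{g_\ell}\hat\mu_\ell^j$.

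The decisive observation is that $B^{g_\ell}\hat\mu_\ell^j$ is supported in a fixed annulus bounded away from the degenerating central geodesic $\{\tau=0\}$. Indeed, since $\hat\mu_\ell^j$ is trace-free we have $B^{g_\ell}\hat\mu_\ell^j = \delta^{g_\ell}\hat\mu_\ell^j$, and as $\kappa_{\ell,0}$ and $\nu_{\ell,0}$ are genuine transverse-traceless tensors for $g_\ell$ on $\calC$,
\[
\delta^{g_\ell}(\chi\kappa_{\ell,0}) = \chi\,\delta^{g_\ell}\kappa_{\ell,0} + [\delta^{g_\ell},\chi]\kappa_{\ell,0} = [\delta^{g_\ell},\chi]\kappa_{\ell,0},
\]
which is supported in the annulus $\tfrac12 \leq |\tau| \leq \tfrac34$ where $d\chi \neq 0$ (and likewise with $\nu_{\ell,0}$). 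On this annulus $\tau$ stays bounded away from $0$, so the explicit formul\ae\ in \eqref{eq:tttensors} present $\kappa_{\ell,0}$ and $\nu_{\ell,0}$ as smooth functions of $\tau,\theta$ multiplied by polyhomogeneous prefactors in $\ell$; combined with the polyhomogeneity of $g_\ell$ we conclude that $B^{g_\ell}\hat\mu_\ell^j$ is a polyhomogeneous $1$-form on $\Sigma\times[0,\varepsilon)$ which vanishes identically for $|\tau|\leq\tfrac12$.

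With these observations in hand, Lemma~\ref{lem:exactinverse} applies directly with $c=\tfrac12$ and produces a unique polyhomogeneous $1$-form $\omega_\ell = G^{g_\ell} B^{g_\ell}\hat\mu_\ell^j$ on the blowup $\wh{\calC}$. Applying $(\delta^{g_\ell})^*$ and $\pi^{g_\ell}$ to $\omega_\ell$ then yields a polyhomogeneous symmetric $2$-tensor, and subtracting this from the polyhomogeneous $\hat\mu_\ell^j$ completes the proof. The main potential obstacle, namely the intricate behaviour of the Schwartz kernel of $G^{g_\ell}$ near $\{\tau=\tilde\tau=\ell=0\}$ which would otherwise require the full pseudodifferential machinery of \cite{ARS}, is precisely sidestepped by the fact that $B^{g_\ell}\hat\mu_\ell^j$ is supported in the thick part of $\Sigma$, which is exactly the setting treated by the parametrix construction of Section~\ref{sect:parametrix}.
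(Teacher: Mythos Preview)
Your proof is correct and follows essentially the same approach as the paper: both hinge on the observation that $\delta^{g_\ell}\hat\mu_\ell^j$ is supported in the annulus $\tfrac12\le|\tau|\le\tfrac34$ (since $\kappa_{\ell,0}$, $\nu_{\ell,0}$ are divergence-free on $\calC$), then invoke Lemma~\ref{lem:exactinverse} to get polyhomogeneity of $G^{g_\ell}\delta^{g_\ell}\hat\mu_\ell^j$, and finish via \eqref{defT}. Your version is simply more explicit about why $\hat\mu_\ell^j$ itself is polyhomogeneous and why the algebraic and first-order pieces $\pi^{g_\ell}$, $(\delta^{g_\ell})^*$ preserve this, which the paper's proof leaves to the reader.
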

\begin{proof}
Since the tensors $\kappa_{\ell,0}$ and $\nu_{\ell,0}$ are divergence-free with respect to $g_{\ell}$, then certainly
$\delta^{g_\ell} \hat\mu_{\ell}^j$ vanishes except when $\frac{1}{2}\leq|\tau|\leq\frac{3}{4}$. Thus Lemma \ref{lem:exactinverse} 
can be applied, with $h_{\ell}=\delta^{g_{\ell}}\hat\mu_{\ell}^j$, and shows that the family of $1$-forms 
$G^{g_{\ell}}\delta^{g_{\ell}}\hat\mu_{\ell}^j$ is polyhomogenous. The claim then follows from \eqref{defT}. 
\end{proof}

We must also prove that $\mu_{\ell}^1$ and $\mu_{\ell}^2$ are linearly independent when $\ell$ is small. This is proved 
in the remainder of this section. By Lemma \ref{lem:basicprojestimate}, it remains to estimate the divergences 
of $\hat\mu_{\ell}^1$ and $\hat\mu_{\ell}^2$.

\begin{prop}\label{prop:extensioncyl}
The divergence of $\hat\mu_{\ell}^j$ vanishes outside $A' :=\{\frac{1}{2}\leq|\tau|\leq \frac{3}{4}\}$ and satisfies  
\begin{equation*}
\|\delta^{g_{\ell}}\hat\mu_{\ell}^j\|_{L^2(\Sigma,dA_{g_{\ell}})}\leq C\ell^{\frac{3}{2}}\qquad(j=1,2).
\end{equation*}
Moreover, $||\sigma_{\ell}^j||_{L^2}=||\mu_{\ell}^j-\hat\mu_{\ell}^j||_{L^2} \to 0$ as $\ell\searrow0$.
\end{prop}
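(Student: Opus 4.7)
The proof has two claims: the divergence bound and the consequent $L^2$ convergence $\|\sigma_\ell^j\|_{L^2}\to 0$. My plan is to reduce the first to a direct pointwise estimate using the explicit formulas for $\kappa_{\ell,0}$ and $\nu_{\ell,0}$ from Proposition \ref{prop:tttensors}, and then derive the second from Lemma \ref{lem:basicprojestimate} combined with the uniform bound on the conformal factor (Lemma \ref{lem:boundconffactor}).

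For the divergence, I first note the Leibniz rule for a scalar $\chi$ and a symmetric two-tensor $\kappa$,
\[
\delta^{g_\ell}(\chi\kappa) = \chi\,\delta^{g_\ell}\kappa - \iota_{\nabla^{g_\ell}\chi}\kappa,
\]
so since $\kappa_{\ell,0}$ and $\nu_{\ell,0}$ are transverse-traceless, $\delta^{g_\ell}\hat\mu_\ell^j$ is supported on $\{d\chi \neq 0\}\subset A'$. This already gives the first sentence of the statement. On $A'$, $|\tau|\geq\tfrac12$, so $\tau^2+\ell^2\geq\tfrac14$ and the factor $(\arctan(1/\ell))^{-1/2}$ is bounded as $\ell\searrow 0$; the explicit form of $\kappa_{\ell,0},\nu_{\ell,0}$ then yields $|\kappa_{\ell,0}|_{g_\ell},|\nu_{\ell,0}|_{g_\ell}\leq C\ell^{3/2}$ uniformly on $A'$. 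Since $\chi$ depends only on $\tau$ and $|d\tau|_{g_\ell} = \sqrt{\tau^2+\ell^2}$ is bounded on $A'$, we also have $|\nabla^{g_\ell}\chi|_{g_\ell}\leq C$. Finally, the area form of $g_\ell$ on the cylinder is simply $d\tau\wedge d\theta$, and $A'$ has $g_\ell$-area bounded independently of $\ell$. Integrating the pointwise estimate gives $\|\delta^{g_\ell}\hat\mu_\ell^j\|_{L^2(\Sigma,dA_{g_\ell})}\leq C\ell^{3/2}$.

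For the second claim, $\sigma_\ell^j = T^{g_\ell}\hat\mu_\ell^j - \hat\mu_\ell^j$, and I apply Lemma \ref{lem:basicprojestimate} to $\kappa = \hat\mu_\ell^j$, which is trace-free since $\kappa_{\ell,0},\nu_{\ell,0}$ are trace-free and $\chi$ is a scalar. This yields
\[
\|\sigma_\ell^j\|_{L^2(\Sigma,dA_{g_\ell})} \leq C\,\|\delta^{g_\ell}\hat\mu_\ell^j\|_{L^2(\Sigma,dA_{g_\ell})} \leq C\,\ell^{3/2},
\]
which tends to $0$ as $\ell\searrow 0$.

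The only subtle point, and the step I would single out as requiring care, is the uniformity of the constant $C$ in Lemma \ref{lem:basicprojestimate} as $\ell\searrow 0$: that constant depends only on $\|u\|_{C^0}$ where $u$ is the conformal factor to the hyperbolic representative. This is exactly what Lemma \ref{lem:boundconffactor} supplies, so invoking it closes the argument. Everything else is a straightforward computation with the explicit formulae of the preceding section.
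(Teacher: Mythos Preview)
Your proof is correct and follows essentially the same approach as the paper. The only difference is cosmetic: the paper writes out the divergence $\delta^{g_\ell}\hat\mu_\ell^1$ explicitly as a multiple of $\sigma_1$ and then integrates $1/(\tau^2+\ell^2)$ over $A'$, whereas you bound the contraction $\iota_{\nabla\chi}\kappa_{\ell,0}$ pointwise using $\tau^2+\ell^2\geq\tfrac14$ on $A'$ and then integrate against the bounded area; both routes give the same $\ell^{3/2}$ bound, and both close the second claim via Lemmas \ref{lem:basicprojestimate} and \ref{lem:boundconffactor}.
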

\begin{proof}
Since $\kappa_{\ell,0}$ is divergence-free, it follows from \eqref{eq:tttensors} that
\begin{equation*}
\delta^{g_{\ell}}\hat\mu_{\ell}^1= (\partial_{\tau}\chi) \sqrt{\tau^2+\ell^2}\frac{\ell^{\frac{3}{2}}}{\arctan(\frac{1}{\ell})^{\frac{1}{2}}(\ell^2
+\tau^2)}\sigma_1,
\end{equation*}
which has support in $A'$.  We estimate 
\begin{eqnarray*}
\|\delta^{g_{\ell}}\hat\mu_{\ell}^1\|_{L^2(\Sigma,dA_{g_{\ell}})}^2&=&2\int_{\frac{1}{2}}^{\frac{3}{4}}\int_0^{2\pi} \frac{\ell^3|\partial_{\tau}\chi(\tau)|^2}{\arctan(\frac{1}{\ell})(\tau^2+\ell^2)}\,d\theta\,d\tau \\
&\leq&\frac{C\ell^3}{\arctan(\frac{1}{\ell})}\int_{\frac{1}{2}}^{\frac{3}{4}}\frac{d\tau}{\tau^2+\ell^2}\\
&=&\frac{C\ell^2}{\arctan(\frac{1}{\ell})}\Big(\arctan(\frac{3}{4\ell})-\arctan(\frac{1}{2\ell})\Big). 
\end{eqnarray*}
Observing that
\begin{equation*}
\lim_{\ell\to0}\frac{\arctan(\frac{3}{4\ell})-\arctan(\frac{1}{2\ell})}{\ell}=\frac{2}{3},
\end{equation*}
the assertion on the decay rates of $\delta^{g_{\ell}}\hat\mu_{\ell}^j$, $j = 1,2$, is immediate. 

For the second claim, let $G_{\ell}=e^{-2u_{\ell}}g_{\ell}$ be the hyperbolic metric, as before. By Lemma \ref{lem:boundconffactor}, 
the family of conformal  exponents $u_{\ell}$ is $\calC^0$ bounded, so we can apply Lemma \ref{lem:basicprojestimate}
to get the result.   
\end{proof}

\subsubsection*{Transverse-traceless tensors decaying on long cylinders} 
We next construct a local $(6\gamma-8)$-frame of transverse-traceless tensors which is polyhomogeneous  and complements
the two sections determined in the last subsection.  Taking all these transverse-traceless tensors together, we will have
obtained a local polyhomogeneous frame of the rank-$(6\gamma-6)$ bundle $\mathcal S_{\operatorname{tt}}(g_q)$. 

Fix $q_0\in H_j$ and let $(\Sigma,g_0)$ be the complete surface representing $q_0$. The space of symmetric tensors 
which are transverse-traceless with respect to $g_0$ and decay along the cusps is denoted $\Stt(g_0)$. It is well-known
that the real dimension of this space is $6\gamma-8$.  Fix an orthonormal basis $\{\kappa^3,\ldots,\kappa^{6\gamma-6}\}$ 
for this space. By \eqref{eq:decaytt}, each $\kappa^j$ decays exponentially as $\tau\to 0$, so we obtain the approximately 
orthonormal local frame
\begin{equation}\label{eq:approxframe}
\mu^j(q)\coloneqq\chi\kappa^j\qquad(q\in\calV, j=3,\ldots,6\gamma-6),
\end{equation}
which vanishes outside the cylinder, is identically $1$ near $\tau = 0$, and so $\del_\tau \chi$ is supported in
some region $A_1' = \{0 < \tau_1 \leq \tau \leq \tau_2 \}$.  The constants $\tau_j$ will be fixed later. We also 
write $\calC_1\coloneqq A_1' \times S^1$ and  $\calC_2\coloneqq \{|\tau| \leq \tau_1 \}\times S^1$. 
We now define 
\begin{equation}\label{eq:correctiontt}
\mu_{\ell}^j\coloneqq T^{g_{\ell}}\mu^j, \qquad j=3,\ldots,6\gamma-6,
\end{equation}
i.e.,~$\mu_{\ell}^j$ is the orthogonal projection of $\mu^j$ onto its transverse-traceless part with respect to the metric $g_{\ell}$.  

\begin{prop}\label{prop:framethickpart}
The projected family $\{\mu_{\ell}^j\}$ is polyhomogeneous and for $\ell$ sufficiently small, these vectors are independent.
\end{prop}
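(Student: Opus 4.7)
The proof has two components: polyhomogeneity of each family $\mu_\ell^j$, and linear independence of the collection $\{\mu_\ell^j\}_{j=3}^{6\gamma-6}$ for small $\ell$.

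For polyhomogeneity, I begin with the identity
\[
\mu_\ell^j = T^{g_\ell} \mu^j = \pi^{g_\ell} \mu^j - \pi^{g_\ell}(\delta^{g_\ell})^{\ast} G^{g_\ell} B^{g_\ell} \mu^j
\]
arising from \eqref{defT}. By Proposition \ref{prop:verticaltwotensor}, the family of approximately hyperbolic metrics $g_\ell$ is polyhomogeneous in $\ell$, so the pointwise operators $\pi^{g_\ell}$, $\delta^{g_\ell}$, $(\delta^{g_\ell})^{\ast}$ and $B^{g_\ell}$ are polyhomogeneous as well. Since the approximate basis element $\mu^j=\chi\kappa^j$ is $\ell$-independent, both $\pi^{g_\ell}\mu^j$ and $h_\ell := B^{g_\ell} \mu^j$ are polyhomogeneous families. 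The crucial point is that the choice of $\chi$, together with the exponential decay $\kappa^j \sim e^{-k/|\tau|}$ near the node obtained in \eqref{eq:decaytt}, allows one to arrange that $h_\ell$ vanishes identically on $\calC_2=\{|\tau|\leq\tau_1\}\times S^1$, modulo an exponentially small remainder produced by the cut-off tail of $\kappa^j$; the latter contributes only a rapidly decaying, and hence trivially polyhomogeneous, correction after applying $G^{g_\ell}$. Lemma \ref{lem:exactinverse} then shows that $G^{g_\ell} h_\ell$ is polyhomogeneous on $[\Sigma\times[0,\ell_0);\{\tau=\ell=0\}]$, and composing with the polyhomogeneous operator $\pi^{g_\ell}(\delta^{g_\ell})^{\ast}$ preserves this regularity. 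This gives the polyhomogeneity of each $\mu_\ell^j$.

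For linear independence, I combine Lemma \ref{lem:basicprojestimate} with the uniform $\calC^0$-bound on the conformal factor from Lemma \ref{lem:boundconffactor} to obtain
\[
\|\mu_\ell^j - \mu^j\|_{L^2(\Sigma, dA_{g_\ell})} \leq C \|\delta^{g_\ell} \mu^j\|_{L^2(\Sigma, dA_{g_\ell})},
\]
with $C$ independent of $\ell$. Since $\delta^{g_0}\kappa^j = 0$ and $g_\ell \to g_0$ smoothly on every compact subset of $\Sigma\setminus\{\tau=0\}$, the right-hand side tends to $0$ as $\ell\searrow 0$. Consequently the Gram matrix
\[
\bigl[\langle \mu_\ell^i, \mu_\ell^j \rangle_{L^2(\Sigma, dA_{g_\ell})}\bigr]_{i,j=3}^{6\gamma-6}
\]
converges to $[\langle \kappa^i,\kappa^j\rangle_{L^2(\Sigma,dA_{g_0})}]_{i,j}$, which is the identity by our choice of orthonormal basis of $\Stt(g_0)$. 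Hence the Gram matrix is invertible for all sufficiently small $\ell$, so the $\mu_\ell^j$ are linearly independent.

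The main obstacle is the verification of the support hypothesis of Lemma \ref{lem:exactinverse}: the source $B^{g_\ell}\mu^j$ need not vanish identically near $\tau=0$, and one must use the exponential decay of $\kappa^j$ at the cusp to split $h_\ell$ into a polyhomogeneous piece compactly supported away from $\tau=0$ and a rapidly decaying tail. Handling this splitting — and checking that the resolvent $G^{g_\ell}$ applied to the tail produces only a rapidly decaying, trivially polyhomogeneous, contribution — is the one technical point that requires genuine care; everything else reduces to compositions of polyhomogeneous operators with polyhomogeneous sources.
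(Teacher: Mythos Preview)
Your overall strategy matches the paper's, but two points go wrong, both stemming from a misreading of the cutoff $\chi$. Despite the garbled prose preceding \eqref{eq:approxframe}, the function $\chi$ is meant to \emph{vanish} on $\calC_2=\{|\tau|\le\tau_1\}\times S^1$ and to equal $1$ outside $\{|\tau|\le\tau_2\}$; this is exactly what the paper's proof uses (``$\operatorname{supp}\chi\subseteq\Sigma\setminus\calC_2$'' and ``$\mu^j$ is supported away from $\{\tau=0\}$''). With this choice, $\mu^j=\chi\kappa^j$ vanishes identically for $|\tau|\le\tau_1$, and hence so does $h_\ell=B^{g_\ell}\mu^j$. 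The support hypothesis of Lemma~\ref{lem:exactinverse} is therefore satisfied on the nose, and the ``exponentially small tail'' you worry about simply is not there. Your entire final paragraph addresses a non-issue.

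Your independence argument, however, contains a genuine error even with the correct $\chi$. For a \emph{fixed} cutoff, $\|\delta^{g_\ell}\mu^j\|_{L^2}$ does not tend to $0$ as $\ell\searrow0$: writing
\[
\delta^{g_\ell}(\chi\kappa^j)=\chi\,\delta^{g_\ell}\kappa^j+(\partial_\tau\chi)\sqrt{\tau^2+\ell^2}\,\kappa^j(E_1),
\]
the first term vanishes in the limit since $\delta^{g_0}\kappa^j=0$ and $g_\ell\to g_0$ on $\operatorname{supp}\chi$, but the second term converges to the nonzero function $(\partial_\tau\chi)|\tau|\,\kappa^j(E_1)$. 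So your Gram-matrix convergence fails. The paper repairs this by allowing the cutoff location to be chosen as well: given $\varepsilon>0$, first take $\tau_2$ small enough that the $\partial_\tau\chi$ term is $<\varepsilon$ (this uses the exponential decay of $\kappa^j$ from \eqref{eq:decaytt}--\eqref{eq:decaytt1}), and only then choose $\ell_0$ small so that the first term is $<\varepsilon$. One obtains $\|\mu_\ell^j-\mu^j\|_{L^2}<C\varepsilon$ for $\ell\le\ell_0$, and independence follows since the $\mu^j=\chi\kappa^j$ themselves are close to the orthonormal set $\{\kappa^j\}$.
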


\begin{proof}
The first statement follows immediately from the polyhomogeneity of $(\delta^{g_{\ell}})^{\ast}$ and of the family of $1$-forms
$G^{g_{\ell}}\delta^{g_{\ell}}\mu^j$. Note that we use here that $\mu^j$ is supported away from $\{\tau=0\}$. 

To prove independence, we consider
\begin{equation*}
\delta^{g_{\ell}}\mu^j=\delta^{g_{\ell}}(\chi\kappa^j)=\chi\delta^{g_{\ell}}\kappa^j+\partial_{\tau}\chi\sqrt{\tau^2+\ell^2}\kappa^j(E_1)
\end{equation*}
Since $\operatorname{supp}\chi \subseteq\Sigma\setminus \calC_2$ and $\operatorname{supp}\partial_{\tau}\chi\subseteq \calC_1$, 
it follows that
\begin{multline}\label{eq:divmu_j}
\frac{1}{2}\|\delta^{g_{\ell}}\mu^j\|_{L^2(\Sigma,dA_{g_{\ell}})}^2\leq\\
\|\chi\delta^{g_{\ell}}\kappa^j\|_{L^2(\Sigma\setminus \calC_2,dA_{g_{\ell}})}^2+\|\partial_{\tau}\chi\sqrt{\tau^2+\ell^2}\kappa^j(E_1)\|_{L^2(\calC_1,dA_{g_{\ell}})}^2.
\end{multline}
Fix $\varepsilon>0$. Choosing $\tau_2 \ll 1$ and $\ell_0>0$ sufficiently small, it follows that if $\ell\leq\ell_0$ and 
$j=3,\ldots,6\gamma-6$, the right-hand side of \eqref{eq:divmu_j} is bounded by $C\varepsilon$. Indeed, on 
$\Sigma\setminus \calC_2$, $\delta^{g_{\ell}}\kappa^j$ converges uniformly to $\delta^{g_{\ell_0}}\kappa^j=0$. As for the 
second term on the right in  \eqref{eq:divmu_j}, the equations \eqref{eq:decaytt} and \eqref{eq:decaytt1} imply that 
$\kappa^j(\tau,\theta)$ decays rapidly as $\tau\to 0$, hence this term is small also. To conclude, 
Lemma \ref{lem:basicprojestimate} give finally that
\begin{equation*}
\|\mu_{\ell}^j-\mu^j\|_{L^2(\Sigma,dA_{g_{\ell}})}=\|T^{g_{\ell}}\mu^j-\mu^j\|_{L^2(\Sigma,dA_{g_{\ell}})}\leq 
C\|\delta^{g_{\ell}}\mu^j\|_{L^2(\Sigma,dA_{g_{\ell}})},
\end{equation*}  
where $C$ does not depend on $\ell$. Hence $\{\mu_{\ell}^3,\ldots,\mu_{\ell}^{6\gamma-6}\}$ is independent when $\varepsilon$ 
is small enough. 
\end{proof}

\subsubsection*{Proof of the main result} 
We have now constructed the frame $\{\mu_{\ell}^1,\ldots,\mu_{\ell}^{6\gamma-6}\}$ of transverse-traceless 
tensors, and the remaining task is to show that these span the vector space $\mathcal S_{\operatorname{tt}}(g_q)$, for each $q\in\calV$.

\begin{lemma}\label{lem:ttsubbundle}
There is a constant $\ell_0>0$ such that for all $0\leq\ell<\ell_0$   the set
\begin{equation}
\{\mu_{\ell}^1,\mu_{\ell}^2,\mu_{\ell}^3,\ldots,\mu_{\ell}^{6\gamma-6}\}
\end{equation}
is a polyhomogeneous  local frame over $\calV$ of the vector bundle $\mathcal S_{\operatorname{tt}}(g_q)$.
\end{lemma}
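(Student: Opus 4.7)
The polyhomogeneity of each section $\mu_\ell^j$ was already established in the preceding two propositions, so it remains to prove that these $6\gamma-6$ sections form a local frame of the rank-$(6\gamma-6)$ bundle $\Stt$ over $\calV$. Since the number of sections equals the rank, I only need to show pointwise linear independence for all $\ell$ sufficiently small. The plan is to analyze the Gram matrix
\[
M_\ell \coloneqq \bigl(\langle \mu_\ell^i, \mu_\ell^j \rangle_{L^2(\Sigma, g_\ell)}\bigr)_{1 \leq i,j \leq 6\gamma-6} = \begin{pmatrix} A_\ell & B_\ell \\ B_\ell^{T} & C_\ell \end{pmatrix},
\]
where $A_\ell$ is the $2 \times 2$ Gram matrix of the ``concentrating'' sections $\mu_\ell^1, \mu_\ell^2$ and $C_\ell$ is the $(6\gamma-8) \times (6\gamma-8)$ Gram matrix of the ``thick-part'' sections $\mu_\ell^3,\ldots,\mu_\ell^{6\gamma-6}$, and to show that $M_\ell$ is a small perturbation of a manifestly invertible block-diagonal matrix.

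For the diagonal blocks the plan is straightforward. By Proposition \ref{prop:extensioncyl}, $\|\mu_\ell^i - \hat\mu_\ell^i\|_{L^2} \to 0$ for $i = 1,2$, so $A_\ell$ is asymptotic to the Gram matrix of $\hat\mu_\ell^1, \hat\mu_\ell^2$. The explicit formulas \eqref{eq:tttensors} show that $\kappa_{\ell,0}$ and $\nu_{\ell,0}$ lie in pointwise orthogonal subspaces of $S_0^2$, and the normalization \eqref{eq:tensorsnormalized} furnishes a uniform lower bound on $\det A_\ell$. Similarly, by Proposition \ref{prop:framethickpart}, $\|\mu_\ell^j - \mu^j\|_{L^2} \to 0$ for $j \geq 3$; since $\{\kappa^j\}_{j\geq 3}$ is orthonormal in $\Stt(g_0)$, $\mu^j = \chi \kappa^j$ agrees with $\kappa^j$ where the latter has non-negligible $L^2$ mass, and $g_\ell \to g_0$ in $\calC^{\infty}$ on the thick part, we obtain $C_\ell \to I_{6\gamma-8}$ as $\ell \searrow 0$.

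The main step, and main obstacle, is to show $B_\ell \to 0$. For $i \in \{1,2\}$ and $j \geq 3$, using that $T^{g_\ell}$ is an $L^2$-orthogonal projection with $T^{g_\ell}\mu^j = \mu_\ell^j$,
\[
\langle \mu_\ell^i, \mu_\ell^j \rangle = \langle \mu_\ell^i, \mu^j \rangle = \langle \hat\mu_\ell^i, \mu^j \rangle + \langle \sigma_\ell^i, \mu^j \rangle,
\]
with $\sigma_\ell^i := \mu_\ell^i - \hat\mu_\ell^i$. The second term is $o(1)$ by Proposition \ref{prop:extensioncyl} combined with the uniform boundedness of $\|\mu^j\|_{L^2}$. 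For the first term I would exploit spatial separation: by construction $\operatorname{supp} \mu^j \subseteq \Sigma \setminus \calC_2 = \{|\tau|\geq \tau_1\}$ for a fixed $\tau_1 > 0$, while $\hat\mu_\ell^i = \chi\kappa_{\ell,0}$ (or $\chi\nu_{\ell,0}$) concentrates its $L^2$-mass in a window of width $O(\ell)$ about $\tau = 0$. A direct estimate from \eqref{eq:tttensors} yields
\[
\|\hat\mu_\ell^i\|_{L^2(\{|\tau| \geq \tau_1\}, g_\ell)}^2 \leq C\int_{\tau_1}^{3/4} \frac{\ell^3}{\arctan(1/\ell)(\tau^2+\ell^2)^2}\, d\tau = O(\ell^3),
\]
so by Cauchy--Schwarz $|\langle \hat\mu_\ell^i, \mu^j \rangle| = O(\ell^{3/2})$, giving $B_\ell \to 0$.

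Combining the three estimates, $M_\ell$ differs from the invertible block-diagonal matrix $\operatorname{diag}(A_0, I_{6\gamma-8})$ by an operator of norm $o(1)$, hence $M_\ell$ is invertible for all $\ell$ sufficiently small. This proves pointwise linear independence of the frame on the interior of $\calV$; the extension to $\ell = 0$ is then immediate by continuity, since by polyhomogeneity $M_\ell$ has a well-defined limit at the front face of $\Mgonet$ which must again be invertible.
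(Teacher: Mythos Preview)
Your argument is correct and follows the same overall strategy as the paper: reduce to invertibility of the Gram matrix, control the diagonal blocks via Propositions \ref{prop:extensioncyl} and \ref{prop:framethickpart}, and show the off-diagonal block $B_\ell$ is small. The one genuine difference lies in how the cross term $\langle \hat\mu_\ell^i, \mu^j\rangle_{g_\ell}$ is handled. You bound it by $O(\ell^{3/2})$ using spatial concentration: $\mu^j$ is supported in $\{|\tau|\geq\tau_1\}$ while the $L^2$-mass of $\hat\mu_\ell^i$ in that region is $O(\ell^3)$. The paper instead observes that this term vanishes \emph{exactly}: since any $\kappa^j\in\Stt(g_0)$ that is $L^2$ on the cusp must have vanishing zeroth Fourier mode there (the $k=0$ solutions $\sim \tau^{-2}$ are not $L^2$), and since $\hat\mu_\ell^i$ is supported on the cylinder with coefficients depending on $\tau$ alone, the $\theta$-integral of the pointwise inner product vanishes. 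The paper's route is slightly cleaner (exact orthogonality rather than an estimate), but your quantitative bound is perfectly adequate and arguably more robust, since it does not rely on the precise Fourier structure of the model. One small imprecision: $C_\ell$ does not literally converge to $I_{6\gamma-8}$, only to a matrix within $\varepsilon$ of it (the defect coming from the fixed cutoff at $\tau_1$); this is of course enough for invertibility, which is all that is needed.
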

\begin{proof}
Recall  the symmetric (but not necessarily divergence-free) tensors $\hat\mu_{\ell}^i$, $i=1,2$ These  are supported on the 
cylinder $\calC\subseteq\Sigma$ and their coefficients depend only on the variable $\tau$. We then consider the family 
\eqref{eq:approxframe}. Altogether, this gives another family $\{\mu^1,\ldots,\mu^{6\gamma-6}\}$ of tensors whose restriction 
to $\calC$ has vanishing zeroth Fourier mode. 
\begin{equation*}
\langle \hat\mu_{\ell}^i,\mu^j\rangle_{g_{\ell}}=0
\end{equation*}
for all $i=1,2$, $j=3,\ldots,6\gamma-6$, and any $\ell>0$. By Propositions \ref{prop:extensioncyl} and \ref{prop:framethickpart}  the estimates
\begin{multline*}
\|\mu_{\ell}^i-\hat\mu_{\ell}^i\|_{L^2(\Sigma,dA_{g_{\ell}})}<\varepsilon\quad(i=1,2)\\
\textrm{and}\qquad\|\mu_{\ell}^j -\mu^j \|_{L^2(\Sigma,dA_{g_{\ell}})}<\varepsilon\quad(j=3,\ldots,6\gamma-6)
\end{multline*} 
hold for all $\varepsilon>0$ and every sufficiently small $0\leq\ell\leq\ell_0(\varepsilon)$. This implies that
\begin{eqnarray*}
|\langle\mu_{\ell}^i,\mu_{\ell}^j\rangle_{g_{\ell}}|&\leq&|\langle\mu_{\ell}^i-\hat\mu_{\ell}^i,\mu_{\ell}^j -\mu^j\rangle|+|\langle\mu_{\ell}^i-\hat\mu_{\ell}^i,\mu^j\rangle_{g_{\ell}}|\\
&&+|\langle\hat\mu_{\ell}^i,\mu_{\ell}^j-\mu^j\rangle_{g_{\ell}}|+|\langle\hat\mu_{\ell}^i,\mu^j\rangle_{g_{\ell}}|\\
&<&\varepsilon^2+\varepsilon(\|\hat\mu_{\ell}^i\|_{L^2(\Sigma,dA_{g_{\ell}})}+\|\mu^j\|_{L^2(\Sigma,dA_{g_{\ell}})})\\
&\leq&C\varepsilon.
\end{eqnarray*}
Hence if $\varepsilon$ is sufficiently small, the subspaces spanned by 
$\{\mu_{\ell}^1,\mu_{\ell}^2\}$ and  $\{\mu_{\ell}^3,\ldots,\mu_{\ell}^{6\gamma-6}\}$ are transversal. This implies the claim.
\end{proof}

We are now in position to prove our main result.

\medskip

\noindent {\it Proof of Theorem~\ref{thm:mainthm}} 
It suffices to establish our result in an open set $\calV$ around a point $q_0 \in \del \wtM_\gamma$.  Let us choose a polyhomogeneous
slice of approximately hyperbolic metrics $g(w)$ in $\calV$.  By Lemma  \ref{lem:ttsubbundle} yields the existence of a local 
polyhomogeneous frame $\kappa_1, \ldots, \kappa_{6\gamma-6}$ of sections for the bundle of transverse-traceless tensors over $\calV$.  
In addition, by \cite{MZ},  the family of conformal factors $e^{2\varphi(w)}$ relating the approximately hyperbolic metrics $g(w)$ to the 
exact hyperbolic metrics on each fibre is also polyhomogeneous on $\Mgonet$.   The matrix coefficients of $g_{\WP}$
are given by the expression
\[
(g_{\WP})_{ij} = \int_\Sigma  \langle \kappa_i , \kappa_j \rangle_{g(w)}  e^{-2\varphi(w)} \, dA_{g(w)}.
\]
Everything in this expression is polyhomogeneous.  To finish, we observe that the integral over $\Si$ can be regarded as
a pushforward with respect to the $b$-fibration $\wh{\Pi}: \Mgonet \to \Mgt$. We may therefore invoke the properties
of polyhomogeneous functions with respect to pushforwards by $b$-fibrations, as proved in \cite{MelCCD}. This
theorem proves that each $(g_{\WP})_{ij}$ is polyhomogeneous on $\Mgt$. More precisely, the powers in the polyhomogeneous
expansions of each quantity here are nonnegative integer powers of $\ell^{1/2}$ (or in half-integer powers of the appropriate
boundary defining functions at each face of $\Mgonet$), and each term $\ell^{k/2}$ is possibly multiplied by a polynomial 
in $\log \ell$. The pushforward theorem then asserts that this pushforward has an expansion of exactly the same form. 

As we have noted before, the result of Melrose and Zhu at present only asserts polyhomogeneity of $\varphi$ near $\BbD^{\reg}$,
so our result only applies near this portion of $\Mgonet$. However, they expect their result to hold in general, and when
that is complete, our result here will then extend.  
$\Box$

\end{document}